\newtheorem{theorem}{Theorem}[section]
\newtheorem{proposition}[theorem]{Proposition}
\newtheorem{lemma}[theorem]{Lemma}
\newtheorem{corollary}[theorem]{Corollary}
\theoremstyle{definition}
\newtheorem{definition}[theorem]{Definition}
\newtheorem{example}[theorem]{Example}
\newtheorem{assumption}[theorem]{Assumption}
\newtheorem{remark}[theorem]{Remark}
\numberwithin{equation}{section}
\newcommand{\Conj}{\mathrm{Conj}}
\begin{document}

\title{Cyclic branched coverings of knots and Quandle homology}
\author{Yuichi Kabaya}
\address{Department of Mathematics, Osaka University, 
Toyonaka, Osaka, 560-0043, JAPAN}
\email{y-kabaya@cr.math.sci.osaka-u.ac.jp}

\subjclass[2000]{57M05; 57M10; 57M12; 57M25; 57M27}
\keywords{quandle homology, group homology, cyclic branched covering}

\begin{abstract}
We give a construction of quandle cocycles from group cocycles,
especially, for any integer $p \geq 3$, quandle cocycles of the dihedral quandle $R_p$ from group cocycles of the cyclic group $\mathbb{Z}/p$.  
We will show that a group 3-cocycle of $\mathbb{Z}/p$ gives rise to a non-trivial quandle 3-cocycle of $R_p$. 
When $p$ is an odd prime, since $\dim_{\mathbb{F}_p} H_Q^3(R_p; \mathbb{F}_p) = 1$, our 3-cocycle is a constant multiple of the Mochizuki 3-cocycle up to coboundary.
Dually, we construct a group cycle represented by a cyclic branched covering branched along a knot $K$ 
from the quandle cycle associated with a colored diagram of $K$. 
\end{abstract}

\maketitle

\section{Introduction}
\label{sed:intro}
A quandle, which was introduced by Joyce \cite{joyce}, is an algebraic object whose axioms are motivated by knot theory and conjugation in a group. 
In \cite{CJKLS}, Carter, Jelsovsky, Kamada, Langford and Saito 
introduced a quandle homology theory, and they defined the quandle cocycle invariants for classical knots and surface knots. 
The quandle homology is defined as the homology of a chain complex generated by cubes whose edges are labeled by elements of a quandle.
On the other hand the group homology is defined as the homology of a chain complex generated by tetrahedra whose edges are labeled by elements of a group.
So it is natural to ask a relation between quandle homology and group homology.
This question also arises from the fact that the quandle cocycle invariants were defined as an analogue of the Dijkgraaf-Witten invariants, 
which are defined using group cocycles.

In \cite{inoue-kabaya}, the authors defined a simplicial version of quandle homology and constructed a homomorphism 
from the usual quandle homology to the simplicial quandle homology.
The important point is that this homomorphism gives a triangulation of a knot complement in algebraic fashion. 
This enables us to relate the quandle homology to the topology of knot complements. 

In this paper, we apply the results of \cite{inoue-kabaya} to construct quandle cocycles from group cocycles.
First, we demonstrate how to give a quandle cocycle of the dihedral quandle $R_p$ from a group cocycle of 
the cyclic group $\mathbb{Z}/p$ for any integer $p \geq 3$ in Section \ref{sec:dihedral}.
We will show that a generator of $H^3(\mathbb{Z}/p; \mathbb{Z}/p)$ gives rise to a non-trivial quandle 3-cocycle of $H^3_Q(R_p; \mathbb{Z}/p)$.
When $p$ is an odd prime, since $\dim_{\mathbb{F}_p} H_Q^3(R_p; \mathbb{F}_p) = 1$, our quandle 3-cocycle is equal to 
a constant multiple of the Mochizuki 3-cocycle \cite{mochizuki} up to coboundary. 

Then we generalize the construction 
to wider classes of quandles. 
Let $G$ be a group and $h$ be an element of $G$, 
then the set $\Conj(h) = \{g^{-1}hg | g \in G\}$ forms a quandle by conjugation.
(It is known by Joyce \cite{joyce} that any faithful homogeneous quandle has such a presentation.)
When some obstruction in second cohomology vanishes, we will construct a quandle cocycle of $\Conj(h)$ from a group cocycle of $G$.

Dually, we relate the quandle cycle associated with an arc and region coloring (shadow coloring) of a knot $K$ 
to a group cycle represented by a cyclic branched covering branched along $K$.
Let $D$ be a diagram of $K$. 
We can define the notion of arc and region colorings of $D$ by a quandle $\Conj(h)$. 
A pair of an arc and a region coloring is called a \emph{shadow coloring}.
We can associate a cycle of a quandle homology group to a shadow coloring of $D$.
Using the homomorphism constructed in \cite{inoue-kabaya}, we will construct a group cycle of $G$ represented by a cyclic branched covering branched along $K$.
This reveals a close relationship between the shadow cocycle invariant of a knot and the Dijkgraaf-Witten invariant of the cyclic branched cover.

We remark that Hatakenaka and Nosaka defined an invariant of 3-manifolds called the \emph{4-fold symmetric quandle homotopy invariant} \cite{hatakenaka-nosaka}, 
based on the fact that any 3-manifold can be represented as a 4-fold simple branched covering of $S^3$ along a link.
As an application, they showed that the shadow cocycle invariant of a link for the Mochizuki $3$-cocycle is equal to 
a scalar multiple of the Dijkgraaf-Witten invariant of the double branched cover along the link. 

This paper is organized as follows.
In Section \ref{sec:group_homology}, we recall the definition of group homology and show how to represent a group cycle by a triangulation with a labeling of its $1$-simplices.
We give a presentation of the fundamental group of a cyclic branched covering branched along a knot in Section \ref{sec:pres_of_cyc_bra_cov}, 
which is independent from the other sections.
In Section \ref{sec:cyc_rep_by_cyc_bra_cov}, we construct a group cycle represented by a cyclic branched cover.
We recall the definition of quandles and their homology theory in Section \ref{sec:quandle_and_homology}.
We review some results from \cite{inoue-kabaya} in Section \ref{sec:main_map} 
and apply them to construct quandle cocycles of the dihedral quandle $R_p$ in Section \ref{sec:dihedral}.
The reader who is interested in the form of the $3$-cocycle, consult (\ref{eq:the_cocycle}) (and (\ref{eq:formula_of_d})).
We will generalize the construction 
to wider classes of quandles in Section \ref{sec:gen_const}.
In Section \ref{sec:quandle_cyc_as_cyc_bra_cov}, 
we construct  a group cycle represented by a cyclic branched covering from the quandle cycle associated with a shadow coloring. 
The reader who is only interested in the construction of quandle cocycles from group cocycles
may skip Sections \ref{sec:group_homology} -- \ref{sec:cyc_rep_by_cyc_bra_cov} except \S \ref{subsec:group_homology}.


\begin{flushleft}
\bf Acknowledgement
\end{flushleft}
The author thanks Takefumi Nosaka for useful discussions.
He also thanks the referees for helpful comments to improve the exposition.
The author is supported by JSPS Research Fellowships for Young Scientists.

\section{Group homology}
\label{sec:group_homology}
In this section, we collect basic facts on group homology.
In \S \ref{subsec:group_homology}, we review the definition of group homology. We refer to \cite{brown} for details.
The material discussed in \S \ref{subsec:cycles_represented_by_triangulations} and \S \ref{subsec:group_cycles_and_representations} was developed in \cite{neumann}.
\subsection{Group homology}
\label{subsec:group_homology}
Let $G$ be a group.
Let $C_n(G)$ be the free $\mathbb{Z}[G]$-module generated by $n$-tuples $[g_1| \dots| g_n]$ of elements of $G$.
Define the boundary map $\partial : C_n(G) \to C_{n-1}(G)$ by
\[
\begin{split}
\partial( [g_1| \dots |g_n] )  =& g_1[g_2|\dots|g_n]  \\
&+ \sum_{i=1}^{n-1}(-1)^i [g_1|\dots|g_ig_{i+1}|\dots|g_n]
+(-1)^n[g_1|\dots|g_{n-1}].
\end{split}
\]
We remark that the chain complex $\{ \dots \to C_1(G) \to C_0(G) \to \mathbb{Z} \to 0 \}$ is acyclic,
where $C_0(G) \cong \mathbb{Z}[G] \to \mathbb{Z}$ is the augmentation map.
So the chain complex $C_*(G)$ gives a free resolution of $\mathbb{Z}$.
Let $M$ be a right $\mathbb{Z}[G]$-module.
The homology of $C_n(G; M) = M \otimes_{\mathbb{Z}[G]} C_n(G)$ is called the \emph{group homology} of $M$ and denoted by $H_n(G; M)$.
In other words, $H_n(G;M) = \mathrm{Tor}_n^{\mathbb{Z}[G]}(M, \mathbb{Z})$.

Let $C'_n(G)$ be the free $\mathbb{Z}$-module generated by $(g_0, \dots, g_n) \in G^{n+1}$. 
Then $C'_n(G)$ is a left $\mathbb{Z}[G]$-module by the action $g (g_0, \dots, g_n) = (g g_0, \dots, g g_n)$.
Define the boundary operator of $C'_n(G)$ by
\[
\partial (g_0, \dots, g_n) = \sum_{i=0}^{n} (-1)^i (g_0, \dots, \widehat{g_i}, \dots ,g_n).
\]
$C_*(G)$ and $C'_*(G)$ are isomorphic as chain complexes.
In fact, the following correspondence gives an isomorphism:
\[
\begin{split}
[g_1| g_2| \dots | g_n] &\leftrightarrow  (1,g_1,g_1g_2, \dots, g_1 \cdots g_n) \\
(\quad  g_0[g_0^{-1}g_1 | g_1^{-1}g_2| \dots | g_{n-1}^{-1} g_n] &\leftrightarrow (g_0,\dots, g_n) \quad)
\end{split}
\]
The notation using $(g_0,\dots, g_n)$ is called \emph{homogeneous} and 
the one using $[g_1|\dots |g_n]$ is called \emph{inhomogeneous}.

Factoring out $C_n(G)$ by the degenerate subcomplex, that is generated by $[g_1|\dots |g_n]$ such that $g_i = 1$ for some $i$,
we obtain the \emph{normalized} chain complex and its homology group.
It is known that the group homology using the normalized chain complex coincides with the homology using the unnormalized one.
In the homogeneous notation, we factor out $C'_n(G)$ by the subcomplex generated by $(g_0, \dots, g_n)$ such that $g_i=g_{i+1}$ for some $i$
to define the normalized chain complex.

For a left $\mathbb{Z}[G]$-module $N$, 
the group cohomology $H^n(G; N)$ is defined as the cohomology of the cochain complex $C^n(G;N) = \mathrm{Hom}_{\mathbb{Z}[G]}(C_n(G),N)$.
Let $A$ be an abelian group.
A cocycle of $C^n(G;A)$ in the homogeneous notation is a function $f: G^{n+1} \to A$ satisfying the following conditions: 
\begin{enumerate}
\item $\displaystyle\sum_{i=0}^{n+1} (-1)^i f(x_0, \dots, \widehat{x_i}, \dots ,x_{n+1}) = 0$, 
\item $f(gx_0, \dots, gx_n) =  f(x_0, \dots, x_n)$ for any $g \in G$. \quad (left invariance)
\end{enumerate}
If $f$ also satisfies
\begin{enumerate}
\item[(3)] $f(x_0, \dots , x_{n} ) = 0$ if $x_i=x_{i+1}$ for some $i$, 
\end{enumerate}
then $f$ is a normalized $n$-cocycle.
We can show that any $n$-cocycle is cohomologous to a normalized $n$-cocycle.

\subsection{Cycles represented by triangulations}
\label{subsec:cycles_represented_by_triangulations}
Let $\Delta$ be an $n$-dimensional simplex. 
We label the vertices of $\Delta$ by $0,1,\dots, n$.
A face of $\Delta$ is presented by a subset of $\{0,1,\dots, n\}$.
Let $\langle i_0, \dots i_k\rangle$ be the face spanned by $i_0, \dots, i_k \in  \{0, \dots , n\}$ with a vertex ordering
given by $i_0, \dots, i_k$.
Any face inherits a vertex ordering from the vertex ordering of $\Delta$
i.e. $\langle i_0, \dots i_k\rangle$ with $i_0 < i_1 < \dots < i_k$.

\begin{definition}
Let $T$ be a CW-complex obtained by gluing a finite number of $n$-dimensional simplices
along their $(n-1)$-dimensional faces in pairs by simplicial homeomorphisms.
We denote the $k$-skeleton of $T$ by $T^{(k)}$.
We assume that the gluing maps preserve the vertex orderings of the faces.
Then $T - T^{(n-3)}$ is homeomorphic to a topological $n$-manifold (not orientable in general).
When $T - T^{(n-3)}$ is oriented, we call $T$ an \emph{ordered $n$-cycle}.
\end{definition}

Consider an $n$-cycle $\sigma$ of $C_n(G; \mathbb{Z})$. 
Then $\sigma$ is represented by a sum 
\[
\sum_{j} \epsilon_j [g_{j 1}| \dots |g_{j n}]
\] 
where $\epsilon_j = \pm 1$ and $g_{j k} \in G$.
For each $[g_{j 1}| \dots | g_{j n}]$, take an $n$-simplex $\Delta_j$.
Then label the edge $\langle i_1 i_2 \rangle$ of $\Delta_j$ by $g_{j i_1} g_{j (i_{1}+1)} \dots g_{j i_2}$ for $i_1 < i_2$.
In particular, we have:
\[
\langle 0,1 \rangle \leftrightarrow g_{j 1} ,  \quad
\langle 1,2 \rangle \leftrightarrow g_{j 2} ,  \quad
\dots \quad,
\langle n-1,n \rangle \leftrightarrow g_{j n}.
\]
We denote the label of $\langle i_1, i_2 \rangle$ by $\lambda \langle i_1, i_2 \rangle$.
For $i_1 > i_2$, label the oriented edge $\langle i_1, i_2 \rangle$ by $\lambda \langle i_2, i_1\rangle ^{-1}$. 
For any 2-dimensional face $\langle i_0, i_1, i_2 \rangle$, 
we have $ \lambda\langle i_0, i_1 \rangle \lambda\langle i_1, i_2 \rangle = \lambda \langle i_0, i_2 \rangle$.
Rewriting them in the homogeneous notation, we assign labels to the vertices of $\Delta_j$ as 
\[
0 \leftrightarrow 1, \quad 1 \leftrightarrow g_{j 1}, \quad 2 \leftrightarrow g_{j 1}  g_{j 2}, \quad \dots \quad, n \leftrightarrow g_{j 1}\dots g_{j n}
\]
up to the left action of $G$ (Figure \ref{fig:labeling}). 
\begin{figure}
\input{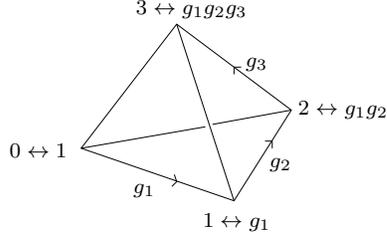}
\caption{A labeling of a simplex.}
\label{fig:labeling}
\end{figure}

Since $\partial \sigma = 0$, $(n-1)$-dimensional faces cancel in pairs.
Gluing $\Delta_j$'s along their faces according to such pairings, we obtain an $n$-cycle $T$. 
At any $(n-1)$-simplex of $T$, there exist exactly two adjacent $n$-simplices.
The labelings of the $(n-1)$-simplex derived from these two $n$-simplices coincide.
Thus we have a well-defined labeling of $1$-simplices $\lambda: \{ \textrm{oriented 1-simplices of $T$} \} \to G$ satisfying,
\begin{enumerate}
\item\label{item:circle} $ \lambda\langle i_0, i_1 \rangle \lambda\langle i_1, i_2 \rangle = \lambda \langle i_0, i_2 \rangle$ 
for any 2-dimensional face $\langle i_0, i_1, i_2 \rangle$, 
\item\label{item:inverse} $ \lambda\langle i_1, i_0 \rangle = \lambda \langle i_0, i_1\rangle^{-1}$.
\end{enumerate}
We call a labeling of $1$-simplices satisfying the conditions (\ref{item:circle}) and (\ref{item:inverse}) \emph{$G$-valued $1$-cocycle}.
Orient $\Delta_j$ positive if $\epsilon_j = 1$ and negative if $\epsilon_j = -1$.
Since these orientations agree on face pairings, thus we have an orientation on $T$.
Therefore $T$ is an ordered $n$-cycle with a $G$-valued $1$-cocycle $\lambda$.
In general, $T$ may not be connected,  but we assume that $T$ is connected because we can treat each connected component separately in our arguments.
Conversely any ordered $n$-cycle $T$ with a $G$-valued $1$-cocycle $\lambda$ represents an $n$-cycle of $C_n(G; \mathbb{Z})$.

\subsection{Group cycles and representations}
\label{subsec:group_cycles_and_representations}
Suppose a cycle $\sigma \in C_n(G; \mathbb{Z})$ is represented by an ordered $n$-cycle $T$ with a $G$-valued $1$-cocycle $\lambda$.
Then $\sigma$ induces a homomorphism from $\pi_1(T)$ to $G$ as follows.
Let $\widetilde{T}$ be the universal covering of $T$ and $p : \widetilde{T} \to T$ be the covering map.
Then the simplices of $T$ lift to simplices of $\widetilde{T}$ and each lift has an induced vertex ordering
compatible with adjacent $n$-simplices.
The $G$-valued $1$-cocycle $\lambda$ of $T$ induces a $G$-valued $1$-cocycle of $\widetilde{T}$.
Consider a ``fundamental domain'' of $T$, that is a contractible subcomplex $D$ of $\widetilde{T}$ 
such that 
\begin{itemize}
\item $\widetilde{T} = \displaystyle\bigcup_{\gamma \in \pi_1(M)} \gamma D$,
\item $D \cap \gamma D = \textrm{(lower dimensional simplices)}$,  for any $\gamma \neq 1$,
\end{itemize}
where we regard $\pi_1(T)$ as the group of deck transformations.
By definition, the number of $n$-simplices in $D$ coincides with the number of $n$-simplices in $T$, in particular finite.
We fix a base point $\widetilde{*}$ in the interior of $D$.
Each $(n-1)$-simplex on $\partial D$ is glued to another $(n-1)$-simplex on $\partial D$.
We denote such pair of faces by $F_i^{\pm}$.
Let $x_i$ be a path in $T$ which starts at $* = p(\widetilde{*})$, traverses $p(F_i)$ in the direction from $F_i^+$ to $F_i^-$ and ends at $*$.
These paths form a system of generators of the fundamental group $\pi_1(T,*)$.
The relations are given at any $(n-2)$-simplices, around which there are a finite number of 
$p(F_i)$'s.

Fix an $n$-simplex $\Delta$ in $D$ and a labeling of vertices of $\Delta$ derived from $\lambda$.
Then $n$-simplices adjacent to $\Delta$ inherit labelings of vertices from $\lambda$.
In this way, all vertices of $D$ are labeled by elements of $G$.
Now consider the labeling of vertices of $F_i^{+}$ and $F_i^{-}$.
Since these reduce to the same labeling of edges, 
they coincide up to left multiplication.
Therefore there exists an element of $G$ which sends the labeling of vertices of $F_i^{-}$ to the one of $F_i^{+}$. 
Denote the element by $\rho(x_i)$. 
This $\rho$ induces a homomorphism $\rho: \pi_1(T,*) \to G$.

Conversely if we have an ordered $n$-cycle $T$ and a homomorphism $\rho: \pi_1(T,*) \to G$, 
we can construct a $G$-valued $1$-cocycle $\lambda$ and then a cycle of $C_n(G; \mathbb{Z})$ up to boundary as follows.
Since $\rho$ induces a map $T \to K(\pi_1(T,*), 1) \to BG$, we obtain a labeling of $1$-simplices $\lambda$ of $T$.
This gives rise to a $G$-valued $1$-cocycle $\lambda$ and a homology class in $H_n(G;\mathbb{Z})$.
The $G$-valued $1$-cocycle $\lambda$ is well-defined up to the coboundary action. 
A map $\mu: \{ \textrm{0-simplices of $T$} \}  \to G$ acts on a $G$-valued $1$-cocycle $\lambda$ as a coboundary action by
\[
\langle i_1,i_2 \rangle \mapsto \mu(i_1)^{-1} \lambda \langle i_1, i_2 \rangle \mu(i_2).
\]
We can show that the homology class obtained from $\lambda$ does not change under the coboundary action. 
For any $g \in G$, the cycle corresponding to the representation $g^{-1} \rho g$ is obtained from $\lambda$ by the coboundary action by $\mu \equiv  g$.
As a result, the homology class obtained from $\rho$ depends only on the conjugacy class of $\rho$.

For a closed oriented $n$-manifold $M$ and a representation $\rho : \pi_1(M) \to G$, 
we have a homology class defined by the image of the fundamental class $[M]$ under the map $H_n(M) \to H_n(K(\pi_1(M),1)) \to H_n(G;\mathbb{Z})$.
When $M$ is homeomorphic to an ordered $n$-cycle $T$, the homology class is represented by a $G$-valued $1$-cocycle $\lambda$ of $T$ associated with $\rho$.
In this situation, we say that the homology class defined by $M$ and $\rho$ is \emph{represented} by $T$ and $\lambda$.

\section{Cyclic branched covering}
\label{sec:pres_of_cyc_bra_cov}
In this section, we give a presentation of the fundamental group of a cyclic branched covering from the Wirtinger presentation.
\subsection{Presentation of the fundamental group of the branched cover}
Let $K$ be a knot in $S^3$ and $D$ be a diagram of $K$.
Then $\pi_1(S^3 - K)$ is presented by generators and relations, called the Wirtinger presentation.
Let $x_1, \dots , x_n$ be the generators of the Wirtinger presentation, that correspond to the arcs of $D$.
Each crossing (Figure \ref{fig:wirtinger_relation}) gives rise to a relation $x_k = x_j^{-1} x_i x_j$.
\begin{figure}
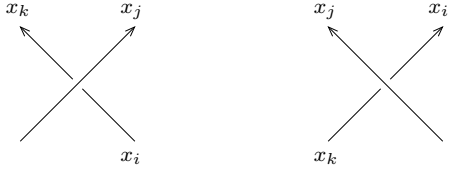

\input{wirtinger_relation_1.pstex_t}
\hspace{50pt}
\input{wirtinger_relation_2.pstex_t}
\caption{The relation is given by $x_k = x_j^{-1} x_i x_j$.}
\label{fig:wirtinger_relation}
\end{figure}

For any integer $l > 1$, let $C_l$ be the $l$-fold cyclic covering of $K$, i.e. the manifold corresponding to the kernel of 
\[
\pi_1(S^3 - K) \to H_1( S^3 - K ) \cong \mathbb{Z} \to \mathbb{Z}/l.
\]
Putting back the knot $K$ to $C_l$, we obtain the $l$-fold cyclic branched covering $\widehat{C}_l$ of $K$.
\begin{proposition}
\label{prop:presentation_of_pi_one}
$\pi_1(C_l)$ has the following presentation:
\[
\begin{split}
\textrm{Generators: }& x_{i,s} \quad (i=1,2, \dots , n, \quad s = 0,1, \dots , l-1), \\
\textrm{Relations: }& x_{k,s} = x_{j,s-1}^{-1} x_{i,s-1} x_{j,s}  \quad (\textrm{for each crossing and } s=0,1, \dots, l-1), \\ 
& x_{1,0}=x_{1,1}= \dots =x_{1,l-2}=1 .
\end{split}
\]
The inclusion map $\pi_1(C_l) \to \pi_1(S^3 - K)$ is given by
\[
x_{i,s} \mapsto x_1^{s-1} x_i x_1^{-s}, 
\]
if we take appropriate base points.
By adding a relation $x_{1,l-1}=1$, we obtain a presentation of $\pi_1(\widehat{C}_l)$.
\end{proposition}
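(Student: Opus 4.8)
The plan is to derive the presentation of $\pi_1(C_l)$ by the Reidemeister--Schreier method, read geometrically as the fundamental group of the $l$-fold cover of the Wirtinger $2$-complex, and then to handle $\widehat{C}_l$ by a van Kampen argument.

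First I would recall that $C_l \to S^3 - K$ is by definition the covering corresponding to $\ker\phi$, where $\phi\colon \pi_1(S^3-K)\to\mathbb{Z}/l$ is the composite of the Hurewicz map onto $H_1(S^3-K)\cong\mathbb{Z}$ (sending every Wirtinger generator $x_i$ to $1$) with reduction modulo $l$. Since each $x_i$ maps to the generator of $\mathbb{Z}/l$, the set $\{1,x_1,x_1^2,\dots,x_1^{l-1}\}$ is a Schreier transversal for $\ker\phi$; equivalently, in the $l$-fold cover of the Wirtinger complex (one $0$-cell, $1$-cells $x_1,\dots,x_n$, one $2$-cell per crossing) there are $l$ copies of each cell, indexed by $s\in\mathbb{Z}/l$, and the $1$-cell lying over $x_i$ at level $s$ runs from the $0$-cell at level $s-1$ to the $0$-cell at level $s$.

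Next I would perform the rewriting. The Reidemeister--Schreier generators $x_{i,s}$ correspond to the pairs (transversal element, $x_i$), and with the basepoint choices alluded to in the statement they are represented in $\pi_1(S^3-K)$ by $x_1^{s-1}x_i x_1^{-s}$; the generators $x_{1,0},\dots,x_{1,l-2}$ are the edges of a chosen spanning tree of the $1$-skeleton and hence trivial, while the last one, $x_{1,l-1}$, is the loop winding once around the meridian in every sheet and maps to $x_1^l$. The relations split into two families: the tree relations $x_{1,0}=\dots=x_{1,l-2}=1$, and, for each crossing and each $s\in\mathbb{Z}/l$, the rewriting of the Wirtinger relator $x_k^{-1}x_j^{-1}x_i x_j$ read starting at level $s$. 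Tracking the level changes --- $x_k^{-1}$ takes level $s$ to $s-1$, then $x_j^{-1}$ to $s-2$, then $x_i$ back to $s-1$, then $x_j$ back to $s$ --- one obtains exactly $x_{k,s}=x_{j,s-1}^{-1}x_{i,s-1}x_{j,s}$, and the stated inclusion homomorphism follows directly from the definition of the generators. Getting every index shift and every inversion correct in this step, and verifying that the $2$-cells of the cover contribute no other relators, is the main place where care is required; the computation is routine but error-prone.

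Finally, for the branched covering I would use that $C_l$ is the complement in $\widehat{C}_l$ of an open tubular neighborhood $N(\widetilde K)$ of $\widetilde K$, the preimage of $K$, and apply van Kampen to $\widehat{C}_l = C_l \cup N(\widetilde K)$: gluing the solid torus $N(\widetilde K)$ back in amounts to quotienting $\pi_1(C_l)$ by the normal closure of the meridian $\mu_{\widetilde K}$ of $\widetilde K$. A normal disk to $\widetilde K$ maps to a normal disk to $K$ by an $l$-fold covering branched at the centre, so $\mu_{\widetilde K}$ maps to $x_1^l$; hence, up to conjugacy, $\mu_{\widetilde K}=x_{1,l-1}$, and adjoining the single relation $x_{1,l-1}=1$ yields the presentation of $\pi_1(\widehat{C}_l)$. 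The only subtlety here is to keep the basepoint and path choices consistent with those fixed for $x_{1,l-1}$ in the unbranched case.
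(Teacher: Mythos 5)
Your proof is correct, and at bottom it is the algebraic counterpart of the paper's geometric argument rather than a truly independent one. The paper builds a handle decomposition of $S^3-N(K)$ from the diagram (one $0$-handle, $1$-handles for the Wirtinger generators, $2$-handles for the relations), lifts it to $C_l$ so that the preimage of the $1$-skeleton splits into $l$ balls $B_0,\dots,B_{l-1}$ joined along disks $D_{i,s}$, and reads the generators from a meridian disk system and the relations from the lifted $2$-handles; your Schreier transversal $\{1,x_1,\dots,x_1^{l-1}\}$ and spanning tree of lifted $x_1$-edges correspond exactly to the paper's choice of discarding $D_{1,0},\dots,D_{1,l-2}$ from the meridian system, and your rewriting of each Wirtinger relator with the level bookkeeping $s\to s-1\to s-2\to s-1\to s$ is the same computation as reading off the boundaries of the lifted $2$-handles. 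For the branched cover, your van Kampen step (kill the meridian of the lifted knot, which maps to $x_1^l$ and hence equals $x_{1,l-1}$ modulo the tree relations) is precisely the paper's attachment of a $2$-handle along $\widetilde{x}_{1,0}\widetilde{x}_{1,1}\cdots\widetilde{x}_{1,l-1}$ followed by capping with a $3$-ball. The only thing the paper's heavier formulation buys is the explicit lifted handle decomposition itself, which is not discarded after this proposition but reused in Section 4 (and Section 8) to triangulate $\widehat{C}_l$ and realize the group $3$-cycle; your version is shorter and cleaner for the group-theoretic statement alone, but would have to rebuild that geometry later. The indexing ambiguities you flag (the exponent in $x_1^{s-1}x_ix_1^{-s}$ must be read modulo $l$ so that $x_{1,l-1}$ genuinely maps to $x_1^l$ rather than to $1$) are already present in the statement and in the paper's own formula, so they are not a defect of your argument.
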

A method for obtaining a presentation of the fundamental group of a branched covering is given in \cite{rolfsen}.
But we give a proof here because some techniques will be used to construct a group cycle represented by $\widehat{C}_l$ later.
\begin{proof}
First, we construct a handle decomposition of the knot complement associated with the Wirtinger presentation (Figure \ref{fig:trefoil}).
Then we lift the handle decomposition to a handle decomposition of $C_l$ (Figure \ref{fig:lifted_splitting}).
After that, we read the relations given by attaching 2-handles.

Let $N(K)$ be a regular neighborhood of $K$.
We shall give a handle decomposition of $S^3 - N(K)$.
We represent $S^3$ by the one point compactification of $\mathbb{R}^3=\{(x,y,z) |  x,y,z \in \mathbb{R} \}$.
Let $B_+ = \{ z \geq 0\} \cup \{ \infty \}$ and $B_- = \{ z \leq 0 \} \cup \{ \infty \}$.
We denote the equatorial sphere of $S^3$ by $S_0= \{ z=0 \} \cup \{ \infty \}$.
Put $K$ in a position such that the projection to $S_0$ has only double points.
Let $n$ be the number of the crossings of this projection.
We deform $K$ in the $z$-direction so that $K$ intersects $S_0$ at $2n$ points and each of $B_{\pm} \cap K$ consists of $n$ arcs.
We call $B_+ \cap K$ over-crossing arcs and $B_{-} \cap K$ under-crossing arcs.
Index the arcs of $B_{+} \cap K$ by $x_i$ $(i=1,2, \dots, n)$.

Now $B_{+} - N(K)$ is homeomorphic to a handlebody of genus $n$ (see Figure \ref{fig:trefoil}). 
Projecting the over-crossing arcs to $S_0$, we obtain a meridian disk system of the handlebody.
We denote the meridian disk corresponding to $x_i$ by $D_i$.
For each under-crossing arc, attach a 2-handle $D^2 \times  D^1$ along $\partial D^2 \times D^1$ to $B_+ - N(K)$ (see Figure \ref{fig:trefoil}). 
Then the resulting manifold is homeomorphic to $(S^3 - N(K))-B^3$ where $B^3$ is a 3-ball.
Attaching $B^3$ along the boundary to $\partial ((S^3 - N(K)) - B^3 )$, the resulting manifold is homeomorphic to $S^3 - N(K)$.
So we  have a handle decomposition of $S^3 - N(K)$ into one 0-handle, $n$ 1-handles, $n$ 2-handles and one 3-handle. 
In this handle decomposition, 1-handles correspond to the Wirtinger generators and 2-handles to the Wirtinger relations.
We denote the set of $i$-handles by $h^{i}$ and $X^{(i)} = h^{0} \cup \dots \cup h^{i}$.

\begin{figure}
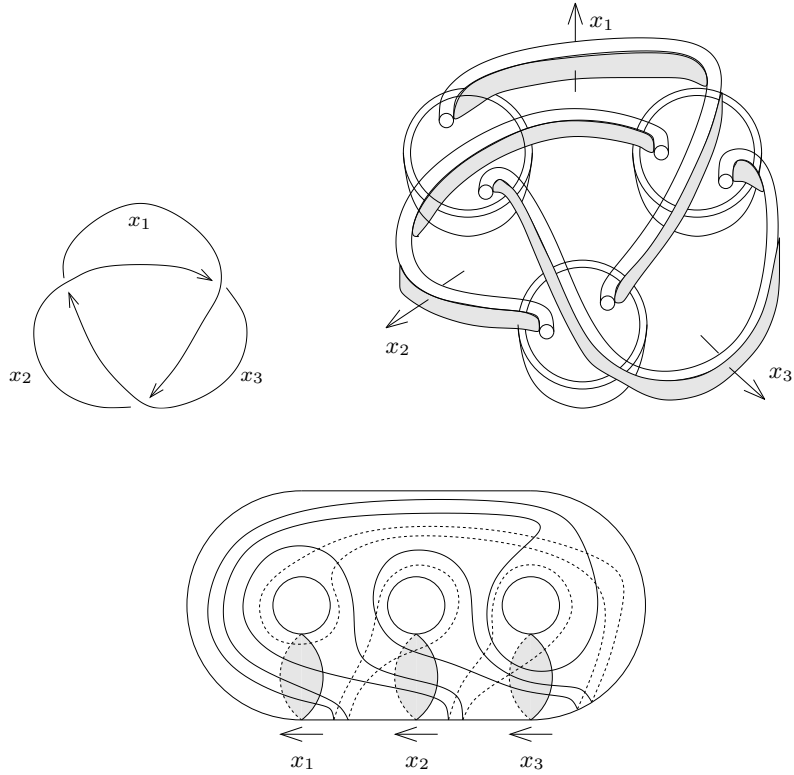

\input{trefoil.pstex_t}
\hspace{30pt}
\input{heegaard.pstex_t}

\vspace{30pt}
\input{heegaard2.pstex_t}
\caption{A handle decomposition of a knot complement.}
\label{fig:trefoil}
\end{figure}

Next we consider the preimage of $X^{(1)}$ in $C_l$.
Cut $X^{(1)}$ along the meridian disks $D_i$, and denote the resulting manifold by $B$, which is homeomorphic to a 3-ball.
Let $*$ be a point in $B \subset S^3 - N(K)$.
We take a loop in $X^{(1)}$ which starts at $*$, then intersects the meridian disk $D_i$ and ends at $*$.
Orient the loop so that it corresponds to the generator of $H_1(S^3 -K)$.
By abuse of notation, we also denote this loop by $x_i$. 
Take a lift $B_0 \subset C_l$ of $B$ and denote the preimage of $* \in B$ in $B_0$ by $\tilde{*}$.
Then there exists a unique lift $\tilde{x}_i$ of the loop $x_i$ starting at $\tilde{*}$.
Since $x_i$ corresponds to the generator of $H_1(S^3 - K)$, $\tilde{x}_i$ ends at another lift of $B$,  we denote it by $B_1$.   
Similarly the lift of $x_i$ starting at $B_1$ ends at another lift, we denote it by $B_2$.
Continuing this, we see that $B_l = B_0$ and all lifts of $B$ will appear.
Therefore the preimage of $X^{(1)}$ is decomposed into $l$ 3-balls  $B_0 \cup B_1 \cup \dots \cup B_{l-1}$.
The intersection of $B_s$ and $B_{s+1}$ consists of $n$ disks each of which is a lift of a meridian disk $D_i$. 
We denote this lifted disk by $D_{i,s}$ (Figure \ref{fig:lifted_splitting}). 
It is easy to check that $ \{D_{1,l-1}\} \cup  \{D_{i,s} \}_{i=2, \dots, n , s=0, \dots l-1}$ forms a meridian disk system of the preimage of $X^{(1)}$. 
Denote the lift of $x_i$ starting at $B_s$ (ending at $B_{s+1}$) by $\widetilde{x}_{i,s}$.
Then the generator of $\pi_1(C_l, \widetilde{*})$ corresponding to the meridian disk $D_{i,s}$ is given by
\begin{equation}
\label{eq:generator_in_cyclic_cover}
\widetilde{x}_{1,0}\widetilde{x}_{1,1} \dots \widetilde{x}_{1,s-1} 
\widetilde{x}_{i,s}
\widetilde{x}_{1,s}^{-1}\widetilde{x}_{1,s-1}^{-1} \dots \widetilde{x}_{1,0}^{-1} .
\end{equation}
We denote this element by $x_{i,s}$.
To give a simple presentation of $\pi_1(C_l, \widetilde{*})$,  
we add extra generators $x_{1,0}, x_{1,1}, \dots, x_{1,l-2}$ corresponding to $\widetilde{x}_{1,0}, \widetilde{x}_{1,1}, \dots \widetilde{x}_{1,l-2}$ respectively
and relations $x_{1,0} = x_{1,1} = \cdots = x_{1,l-2} = 1$.

Finally we consider the relations given by the lifts of 2-handles.
We see that the relation $x_k = x_j^{-1} x_i x_j$ lifts to 
\[
x_{k,s} = x_{j,s-1}^{-1} x_{i,s-1} x_{j,s} \quad (s=0,1,\dots, l-1), 
\]
see Figure \ref{fig:lifted_splitting}.

Since the generator $x_{i,s}$ is represented by (\ref{eq:generator_in_cyclic_cover}), 
the inclusion map $\pi_1(C_l. \tilde{*}) \to \pi_1(S^3-K, *)$ is given by $x_{i,s} \mapsto x_1^{s-1} x_s  x_1^{-s}$.
This proves the second statement.

By adding a 2-handle to $C_l$ along $\widetilde{x}_{1,0}\widetilde{x}_{1,1} \dots \widetilde{x}_{1,l-1}$ and capping off the resulting sphere, 
we obtain a manifold homeomorphic to the cyclic branched covering $\widehat{C}_l$.
Therefore a presentation of the cyclic branched covering is obtained by adding a relation $x_{1,l-1}=1$.
\begin{figure}
\input{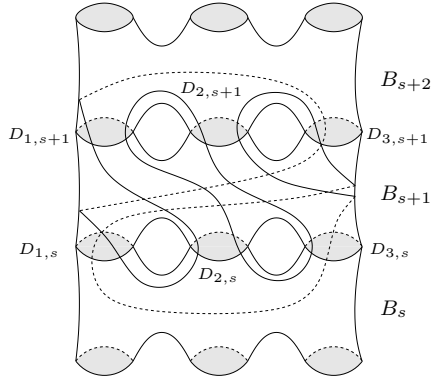}
\caption{A handle decomposition of $C_l$.}
\label{fig:lifted_splitting}
\end{figure}
\end{proof}

\section{Cycle represented by cyclic branched covering}
\label{sec:cyc_rep_by_cyc_bra_cov}
For a representation $\rho : \pi_1(S^3 - K) \to G$, we have the restriction map 
$\rho|_{\pi_1(C_l)} : \pi_1(C_l) \to G$ given by
\begin{equation}
\label{eq:covering_representation}
\rho|_{\pi_1(C_l)}( x_{i,s} ) = \rho(x_1)^{s-1} \rho(x_i) \rho(x_1)^{-s}.
\end{equation}
If $\rho(x_1)^l = 1$, it reduces to a representation $\widehat{\rho} :  \pi_1(\widehat{C}_l) \to G$
and there is a group cycle given by $\widehat{C_l}$ and $\widehat{\rho}$.
In this section, we construct an explicit ordered $3$-cycle and its $G$-valued $1$-cocycle representing the homology class given by $\widehat{C_l}$ and $\widehat{\rho}$.
First we give a triangulation of $S^3 - N(K)$ associated with the Wirtinger presentation.

\begin{figure}
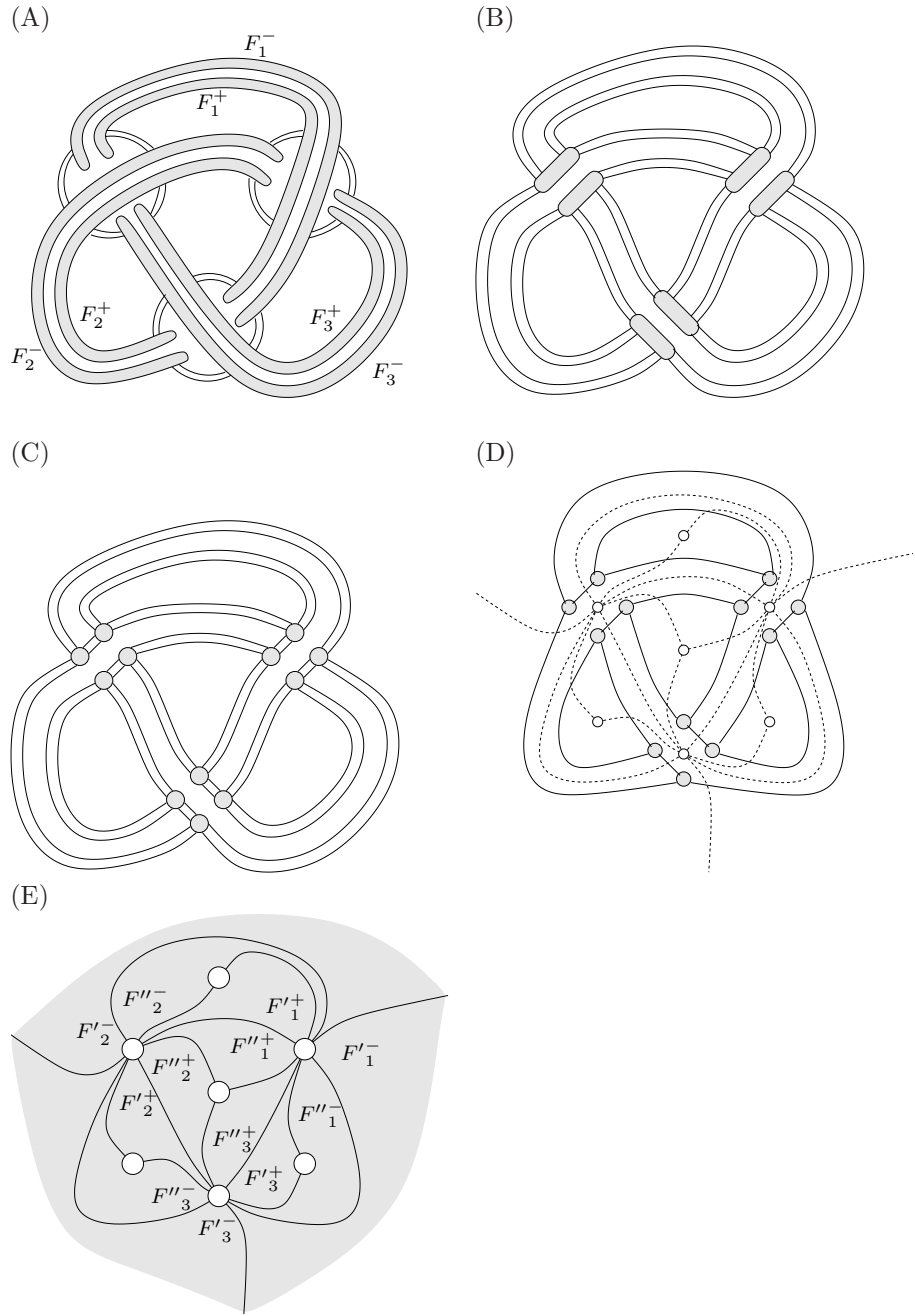


\begin{tabular}{ll}
(A) & (B) \\
\input{cut_heegaard.pstex_t} 
& 
\input{cut_heegaard2.pstex_t} \\
& \\
(C) & (D) \\
\input{stabilized.pstex_t}
&
\input{dual_graph.pstex_t} \\
(E) & \\
\input{triangulation.pstex_t} 
\end{tabular}
\caption{A polyhedral decomposition of $S^3-N(K)$.
We are viewing from inside of $B_+$.
The white rectangles in (A) and (B) correspond to the attaching regions of the 2-handles.}
\label{fig:cut_heegaard}
\end{figure}

Let $K$ be a knot and fix a diagram of $K$.
As in the proof of Proposition \ref{prop:presentation_of_pi_one}, 
we define $B_{\pm}$ and give a Heegaard splitting of $B_+ - N(K)$ with the meridian disks $D_i$.
Cutting the handlebody $B_+ - N(K)$ along the meridian disks $D_i$, the result is a ball with $2n$ 2-cells on the boundary.
We denote the resulting 3-ball by $B$ and the pair of 2-cells corresponding to $D_i$ by $F_i^{+}$ and $F_i^-$  
so that the Wirtinger generator corresponding to $x_i$ runs from $F_i^{+}$ to $F_i^{-}$ (Figure \ref{fig:cut_heegaard}(A)). 
Now consider the attaching regions of the two handles corresponding to under-crossing arcs, 
which consist of annuli  $S^1 \times D^1 (\subset D^2 \times D^1)$ on $\partial (B_+ - N(K))$.
Each annulus 
is divided by $D_i$'s into 4 rectangles on $\partial B$. 
We define a graph on $\partial B$ with vertices consisting of $F_i^{\pm}$ and edges consisting of these rectangles (Figure \ref{fig:cut_heegaard}(B)).
Each vertex of this graph has valency at least four.  
We can make all vertices of the graph into trivalent vertices (Figure \ref{fig:cut_heegaard}(C)) by adding extra $1$-handles and $2$-handles 
(stabilizations of the Heegaard splitting, see Figure \ref{fig:stabilization}). 
We denote the new vertices by ${F'}_i^{\pm}$, ${F''}_i^{\pm}$, ... which originally belonged to $F_i^{\pm}$.
The dual of the graph gives a triangulation of $\partial B$ (Figures \ref{fig:cut_heegaard}(D) and \ref{fig:cut_heegaard}(E)).
By abuse of notation, we denote the triangles dual to the vertices ${F'}_i^{\pm}$, ${F''}_i^{\pm}$, ... by the same symbols. 
Taking a cone from an interior point of $B$, we obtain a triangulation $T$ of $B$ into $4n$ tetrahedra.
Regluing the triangles ${F'}_i^{+}$, ${F''}_i^{+}$, ... to ${F'}_i^{-}$, ${F''}_i^{-}$, ..., 
we obtain a triangulation of $S^3 - N(K)$ into $4n$ tetrahedra, which was explained in \cite{weeks}.
We remark that this is not a triangulation in the usual sense: it is not a simplicial complex, moreover the link of some $0$-simplex is not homeomorphic to the $2$-sphere.
Actually there exists only three $0$-simplices, one is the cone point in $B$ (north pole in \cite{weeks}), 
second is the $0$-simplex corresponding to the complementary regions of the diagram (south pole), 
the last one is a $0$-simplex whose small neighborhood is homeomorphic to the cone over the torus $\partial N(K)$.

\begin{figure}
\input{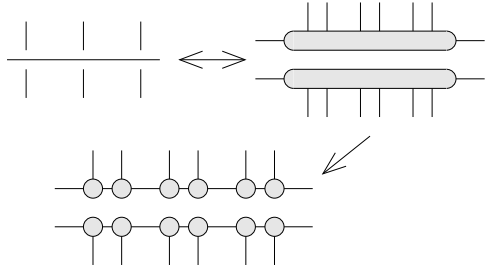}
\caption{Stabilizations of the handle decomposition.}
\label{fig:stabilization}
\end{figure}

\begin{figure}
\input{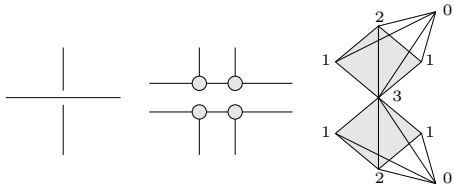}
\caption{The orderings of the tetrahedra of $B$.}
\label{fig:ordering}
\end{figure}

Using this triangulation, we construct a triangulation of the cyclic branched covering $\widehat{C}_l$.
Let $B_0, B_1, \dots, B_{l-1}$ be $l$ copies of $B$ and $T_s$ be the triangulation of $B_s$ we have constructed.
We denote the triangles ${F'}_i^{\pm}$, ${F''}_i^{\pm}$, ...  on $\partial B_s$ by  ${F'}_{i,s}^{\pm}$, ${F''}_{i,s}^{\pm}$, ... respectively.
By abuse of notation, we regard ${F'}_{i,s}^{\pm}$, ${F''}_{i,s}^{\pm}$, ...  simply as $F_{i,s}^{\pm}$.
Glue $T_s$'s along their boundary triangles by the following identification maps
\[
F_{i,s}^- \to F_{i,s-1}^+ \quad (i=1,2, \dots ,n, \quad s = 0,1, \dots, l-1 ).
\]
Denote this triangulation by $\widehat{T}$.
We define an ordering of each tetrahedron by assigning $0$ to the interior vertex of $B_s$, $1$ to the vertex corresponding to complementary region of the diagram, 
$2$ to the vertex corresponding to under-crossing arc, $3$ to the vertex corresponding to over-crossing arc, respectively (Figure \ref{fig:ordering}). 
The orderings are compatible under the gluing maps.  
So $\widehat{T}$ is an ordered $3$-cycle.
Here $\widehat{T}$ is a triangulation of $\widehat{C}_l$ except near the $0$-simplex corresponding to $K$, 
whose small neighborhood is homeomorphic to a cone over a torus.
We can resolve this singularity by inserting a suspension of a $2l$-gon around $K$ (Figure \ref{fig:prism}).
As a result, we obtain an ordered $3$-cycle homeomorphic to $\widehat{C}_l$.
(This procedure is called ``blowing up'' at a $0$-simplex in \cite{neumann}.)

\begin{figure}
\input{prism.pstex_t}
\caption{}
\label{fig:prism}
\end{figure}

We construct a group cycle given by $\widehat{C}_l$ and a representation $\widehat{\rho}:\pi_1(\widehat{C}_l) \to G$ using the triangulation $\widehat{T}$.
Here $\widehat{\rho}$ is given by the set $\{ \widehat{\rho}(x_{i,s}) \} \subset G $ satisfying 
\[
\begin{split}
\widehat{\rho}(x_{k,s}) &= \widehat{\rho}(x_{j,s-1})^{-1} \widehat{\rho}(x_{i,s-1}) \widehat{\rho}(x_{j,s})  \quad (\textrm{$i,j,k$ as in Figure \ref{fig:wirtinger_relation}}, 
\quad s=0,1, \dots, l-1), \\ 
\widehat{\rho}(x_{1,0}) &= \widehat{\rho}(x_{1,1}) = \widehat{\rho}(x_{1,2}) = \dots = \widehat{\rho}(x_{1,l-1}) =1. \\
\end{split}
\]
Give a labeling of vertices on $T_s$ for each $s$.
Let $(g_1,g_2,g_3)$ be the labeling of vertices on $F_{i,s+1}^-$ and $(g'_1,g'_2,g'_3)$ be the labeling of vertices on $F_{i,s}^+$.
If these are related by 
\[
\widehat{\rho}(x_{i,s})(g_1,g_2,g_3) = (g'_1,g'_2, g'_3)
\]
then we obtain a $G$-valued $1$-cocycle $\lambda$ on $\widehat{T}$ by gluing $T_s$ using $\widehat{\rho}$.
To obtain a group cycle given by $\widehat{C}_l$ and $\widehat{\rho}$, we insert a suspension over a $2l$-gon to $\widehat{T}$ at the $0$-simplex corresponding to $K$.
We give an ordering on the vertices of each tetrahedron of the suspension compatible with the ordering on the $2l$-gon, 
e.g. order the central $0$-simplex maximal. 
These tetrahedra inherit a vertex labeling by $G$ on the boundary faces of the suspension. 
We assign any labeling at the central $0$-simplex of the suspension.
Then upper and lower tetrahedra have the same labelings with different orientations.
So these cancel out in pairs and give no contribution to the group cycle represented by $\widehat{C}_l$ and $\widehat{\rho}$.
Therefore the homology class given by $\widehat{T}$ and $\lambda$ represents the homology class given by $\widehat{C}_l$ and $\widehat{\rho}$.

\section{Quandle homology}
\label{sec:quandle_and_homology}
In this section, we review the definitions of quandles, rack (co)homology and quandle (co)homology.
Our treatment of quandle (or rack) homology follows that of \cite{etingof-grana}.
In \S \ref{subsec:shadow} and \S \ref{subsec:cocycle_invariant}, we recall the notions of colorings and quandle cocycle invariants defined in \cite{CJKLS}. 
\subsection{Quandle and quandle homology}
A \emph{quandle} $X$ is a set with a binary operation $*$ satisfying the following axioms:
\begin{itemize}
\item[(Q1)] $x*x = x$ for any $x \in X$, 
\item[(Q2)] the map $*y:X \to X$ defined by $x \mapsto x*y$ is a bijection for any $y \in X$, 
\item[(Q3)] $(x*y)*z = (x*z) * (y*z) $ for any $x,y,z \in X$. 
\end{itemize}
We denote the inverse of $*y$ by $*^{-1} y$.
For a quandle $X$, we define the \emph{associated group} $G_X$ by 
\[
G_X = \langle x \in X | y^{-1} x y = x*y  \quad (x,y \in X) \rangle.
\]
A quandle $X$ has a right $G_X$-action in the following way.
Let $g=x_1^{\epsilon_1} x_2^{\epsilon_2} \cdots x_n^{\epsilon_n}$ be an element of $G_X$ where $x_i \in X$ and $\epsilon_i = \pm 1$.
Define $x*g = ( \cdots ((x*^{\epsilon_1}x_1) *^{\epsilon_2}x_2 ) \cdots )*^{\epsilon_n}x_n $.
One can easily check that this is a right action of $G_X$ on $X$.
So the free abelian group $\mathbb{Z}[X]$ generated by $X$ is a right $\mathbb{Z}[G_X]$-module.

Let $C^R_n(X)$ be the free left $\mathbb{Z}[G_X]$-module generated by $X^n$.
We define the boundary map $C^R_n(X) \to C^R_{n-1}(X)$ by 
\[
\begin{split}
\partial (x_1, x_2, \dots, x_n) = 
\sum_{i=1}^n (-1)^{i}( &(x_1,\dots,\widehat{x_i},\dots,x_n ) \\
& - x_i(x_1*x_i, \dots, x_{i-1}*x_i, x_{i+1}, \dots, x_n)).
\end{split}
\]
Figure \ref{fig:boundary_map} shows a graphical picture of the boundary map.
\begin{figure}
\input{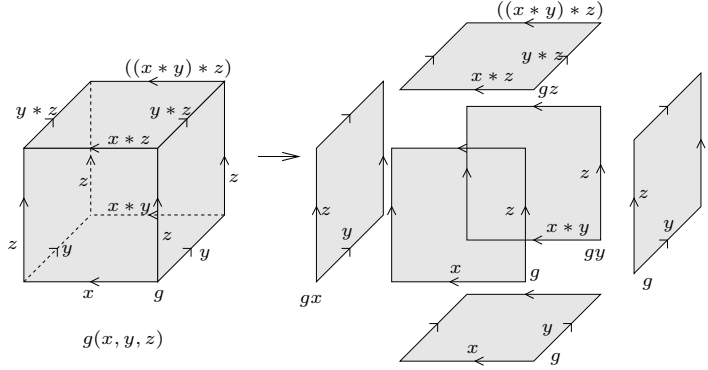}
\caption{$\partial(g (x,y,z)) = - (g (y,z) - gx(y,z)) + (g(x,z)- gy(x*y,z)) - (g(x,y) -gz(x*z,y*z))$.
Here $x,y,z \in X$ and $g \in G_X$. Edges are labeled by elements of $X$ and vertices are labeled by elements of $G_X$.}
\label{fig:boundary_map}
\end{figure}
Let $C^D_n(X)$ be the $\mathbb{Z}[G_X]$-submodule of $C^R_n(X)$ generated by $(x_1,\dots, x_n)$ with $x_i=x_{i+1}$ for some $i$.
Now $C^D_n(X)$ is a subcomplex of $C^R_n(X)$.
Let $C^Q_n(X) = C^R_n(X) / C^D_n(X)$.
For a right $\mathbb{Z}[G_X]$-module $M$,
we define the \emph{rack homology} of $M$ by the homology of $C^R_n(X;M) = M \otimes_{\mathbb{Z}[G_X]} C^R_n(X)$
and denote it by $H_n^R(X;M)$.
We also define the \emph{quandle homology} of $M$ by the homology of $C^Q_n(X;M) = M \otimes_{\mathbb{Z}[G_X]} C^Q_n(X)$
and denote it by $H_n^Q(X;M)$.
The homology $H_n^Q(X;\mathbb{Z})$, here $\mathbb{Z}$ is the trivial $\mathbb{Z}[G_X]$-module, is equal to the usual quandle homology $H^Q_n(X)$.
Let $Y$ be a set with a right $G_X$-action.
For any abelian group $A$, the abelian group $A[Y]$ freely generated by $Y$ over $A$ is a right $\mathbb{Z}[G_X]$-module.
The homology group $H^Q_n(X;A[Y])$ is usually denoted by $H^Q_n(X; A)_Y$ (\cite{kamada}).

Let $N$ be a left $\mathbb{Z}[G_X]$-module.
We define the \emph{rack cohomology} $H_R^n(X; N)$ by the cohomology of $C_R^n(X;N) =\mathrm{Hom}_{\mathbb{Z}[G_X]} (C^R_n(X), N)$.
The \emph{quandle cohomology} $H_Q^n(X;N)$ is defined in a similar way.
For a set $Y$ with a right $G_X$-action and an abelian group $A$, we let 
$\mathrm{Func}(Y,A)$ be the left $\mathbb{Z}[G_X]$-module generated by functions $\phi :Y \to A$,
where the action is defined by $(g \phi) (y) = \phi(y g)$ for $y \in Y $ and $g \in G_X$.
The cohomology group $H_Q^n(X; \mathrm{Func}(Y,A))$ is usually denoted by $H_Q^n(X; A)_Y$ (\cite{kamada}).

\subsection{Shadow coloring and associated quandle cycle}
\label{subsec:shadow}
Let $X$ be a quandle.
Let $L$ be an oriented link in $S^3$ and $D$ be a diagram of $L$.
An \emph{arc coloring} of $D$ is an assignment of elements of $X$ to arcs of $D$ satisfying the following relation at each crossing, 
\begin{center}
\begin{minipage}{55pt}
\input{arc_coloring.pstex_t} 
\end{minipage}
\end{center}
where $x, y \in X$. 
By the Wirtinger presentation of the knot complement, an arc coloring determines a representation of $\pi_1(S^3 - L)$  into the associated group $G_X$.
This is obtained by sending each meridian to its color.

Let $Y$ be a set with a right $G_X$ action.
A \emph{region coloring} of $D$ is an assignment of elements of $Y$ to regions of $D$ satisfying the relation
\begin{center}
\begin{minipage}{55pt}

\vspace{10pt}
\input{reg_coloring.pstex_t}

\vspace{10pt}
\end{minipage}
\end{center}
for any pair of adjacent regions, where $r \in Y$ and $x \in X$.
A pair $\mathcal{S}=(\mathcal{A}, \mathcal{R})$ is called a \emph{shadow coloring}.
If we fix a color of a region of $D$, then colors of other regions are uniquely determined.
Therefore there always exists a region coloring compatible with a given arc coloring.

We define a cycle $C(\mathcal{S})$ of $C^Q_2(X; \mathbb{Z}[Y])$ for a shadow coloring $\mathcal{S}$.
Assign $+ r \otimes (x,y)$ for a positive crossing 
and $- r \otimes (x,y)$ for a negative crossing colored by
\begin{center}
\begin{minipage}{50pt}
\input{pos_gen.pstex_t}
\end{minipage}
\hspace{30pt}
\begin{minipage}{50pt}
\input{neg_gen.pstex_t}
\end{minipage}
\end{center}
respectively.
Then define
\[
C(\mathcal{S}) = \sum_{c: \textrm{crossing}} \epsilon_c r_c \otimes (x_c, y_c) \in C^Q_2(X; \mathbb{Z}[Y]),
\]
where $\epsilon_c=\pm 1$. 
We can easily check that 
\begin{proposition}[see \cite{inoue-kabaya}]
\label{prop:not_depend_on_region_coloring}
$C(\mathcal{S})$ is a cycle and the homology class $[C(\mathcal{S})]$ 
is invariant under Reidemeister moves.
Moreover it does not depend on the choice of the region coloring if the action of $G_X$ on $Y$ is transitive.
\end{proposition}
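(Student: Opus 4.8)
The plan is to verify the three assertions of Proposition~\ref{prop:not_depend_on_region_coloring} in turn: that $C(\mathcal{S})$ is a cycle, that its homology class is Reidemeister-invariant, and that (under transitivity) it is independent of the region coloring. All three are local checks at crossings, so the proof is essentially a bookkeeping exercise with the boundary formula for $C^Q_*(X;\mathbb{Z}[Y])$ and the coloring rules.

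First I would check $\partial C(\mathcal{S}) = 0$. Using the boundary map on $C^R_2(X)$, we have $\partial(x,y) = -\bigl((y) - x(y)\bigr) + \bigl((x) - y(x*y)\bigr)$, which after tensoring with $\mathbb{Z}[Y]$ gives $\partial(r\otimes(x,y)) = -r\otimes(y) + r\!\cdot\! x\otimes(y) + r\otimes(x) - r\!\cdot\! y\otimes(x*y)$. The strategy is to show these $1$-chains cancel arc-by-arc as one traverses the diagram. Orient each arc; along an arc colored $x$ with region $r$ to its, say, right, each overpassing of the arc by a strand colored $y$ contributes a pair of terms of the form $\pm(r\otimes(x) - r\!\cdot\! y\otimes(x*y))$ at one crossing and the matching term at the next crossing, where the color and region have been updated precisely by the arc-coloring and region-coloring rules. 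Walking once around each component, the telescoping sum of these contributions is zero; the sign conventions $\epsilon_c$ for positive versus negative crossings are exactly what makes the incoming and outgoing terms at each arc endpoint cancel. (It is cleaner to group the terms as "contributions to the arc where the strand passes under" versus "contributions to the arc where it passes over" and observe each group telescopes separately.)

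Next, Reidemeister invariance: for each of the three moves I would exhibit the difference $C(\mathcal{S})-C(\mathcal{S}')$ of the cycles before and after the move as an explicit boundary in $C^Q_2(X;\mathbb{Z}[Y])$ (or note it is literally zero). For RI the new crossing is colored by $(x,x)$ with some region $r$, and $r\otimes(x,x)\in C^D_2$, hence zero in $C^Q_2$ — here the quandle (as opposed to rack) quotient is essential. For RII the two new crossings contribute $+r\otimes(x,y)$ and $-r\otimes(x,y)$ (same inputs, opposite signs), which cancel. For RIII the hard case, one compares the three crossings on each side; the difference is $\partial$ of a $3$-chain coming from the tetrahedron that fills the region swept by the moving strand, and axiom (Q3) is exactly the self-distributivity needed to match the colors on the two sides. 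I would cite that this is the standard computation from \cite{CJKLS,inoue-kabaya} and just indicate the $3$-chain.

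Finally, independence of the region coloring when $G_X$ acts transitively on $Y$: any two region colorings compatible with the same arc coloring differ by replacing the color $r_0$ of the unbounded region by $r_0 g$ for a fixed $g\in G_X$ (transitivity lets us realize any target), and then every region color $r$ gets replaced by $r g$ uniformly. Hence $C(\mathcal{S})$ is replaced by $g^{-1}C(\mathcal{S})$ — wait, more precisely by $\sum_c \epsilon_c (r_c g)\otimes(x_c,y_c)$, which is the image of $C(\mathcal{S})$ under the $\mathbb{Z}[G_X]$-module automorphism of $\mathbb{Z}[Y]$ induced by right translation by $g$. Since $H^Q_2(X;\mathbb{Z}[Y])$ is a functor of the coefficient module and this automorphism is the identity on homology when $g$ acts within a single orbit — concretely, $r_c g$ and $r_c$ represent the same element after we mod out, or one checks the two cycles are homologous via a $1$-chain interpolating the region recoloring — the class $[C(\mathcal{S})]$ is unchanged. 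I expect the RIII verification to be the main obstacle, since it is the one place a genuine $3$-dimensional chain and the self-distributivity axiom enter; everything else is either a definitional cancellation or a telescoping sum, so the bulk of the writeup is organizing the crossing-local data rather than any conceptual difficulty.
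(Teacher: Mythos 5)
Your treatment of the first two assertions is sound and follows the standard route (the paper itself offers no proof and defers to \cite{inoue-kabaya}): the four boundary terms $-r\otimes(y)+(r\cdot x)\otimes(y)+r\otimes(x)-(r\cdot y)\otimes(x*y)$ at each crossing do cancel in pairs along the arcs and regions of the diagram, RI dies in the quotient $C^Q_2=C^R_2/C^D_2$, RII cancels on the nose, and RIII is the boundary of the degree-$3$ generator $r\otimes(x,y,z)$ (a cube, not a tetrahedron, but that is cosmetic).

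The genuine gap is in the third assertion. Two region colorings compatible with the same arc coloring satisfy $\mathcal{R}(R)=\mathcal{R}(R_0)\cdot w(R)$ and $\mathcal{R}'(R)=\mathcal{R}'(R_0)\cdot w(R)$, where $w(R)\in G_X$ is the product of the arc colors crossed along a path from the base region $R_0$ to $R$. If $\mathcal{R}'(R_0)=\mathcal{R}(R_0)\cdot g$, then $\mathcal{R}'(R)=\mathcal{R}(R_0)\cdot\bigl(g\,w(R)\bigr)$, which is \emph{not} $\mathcal{R}(R)\cdot g=\mathcal{R}(R_0)\cdot\bigl(w(R)\,g\bigr)$ unless $g$ commutes with the arc colors in $G_X$. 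So your claim that ``every region color $r$ gets replaced by $rg$ uniformly'' is false in general; the pointwise right translate of $\mathcal{R}$ by $g$ is usually not even a valid region coloring. Relatedly, right translation by $g$ is not a $\mathbb{Z}[G_X]$-module endomorphism of $\mathbb{Z}[Y]$ (it fails to commute with the right action unless $g$ is central), and a $1$-chain cannot bound a difference of $2$-cycles, so the homological step you sketch afterwards does not go through either. The argument in \cite{inoue-kabaya} instead reduces this statement to the Reidemeister invariance you have already established: by transitivity write $\mathcal{R}'(R_0)=\mathcal{R}(R_0)*^{\epsilon_1}x_1\cdots *^{\epsilon_k}x_k$ with $x_i\in X$; for a single letter $x$, encircle the entire diagram $D$ by an unknotted loop colored $x$. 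The loop creates no crossings, so the associated $2$-chain is literally unchanged, but the shadow coloring it induces on $D$ has base value $\mathcal{R}(R_0)*^{\pm1}x$; sliding the loop off the diagram into the unbounded region by Reidemeister moves shows the two cycles are homologous, and one iterates over the letters. Some such detour through an auxiliary component is needed; the direct ``coefficient automorphism'' shortcut does not exist.
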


So the homology class $[C(\mathcal{S})]$ is an invariant of the arc coloring $\mathcal{A}$ in many cases.
There are two important sets with right $G_X$-action: one is when $Y$ consists of one point $\{*\}$ and the other is when $Y=X$.
Eisermann showed that the cycle $[C(\mathcal{S})]$ for $Y=\{*\}$ is essentially described by the monodromy of 
some representation of the knot group along the longitude (\cite{eisermann}, \cite{eisermann_coloring}).
So we concentrate on the invariant $[C(\mathcal{S})]$ in the case of $Y=X$ from now on.

\subsection{Quandle cocycle invariant}
\label{subsec:cocycle_invariant}
Let $X$ be a quandle with $ |X| < \infty$.
Let $A$ be an abelian group and  $f$ be a cocycle of $H_Q^2(X; \mathrm{Func}(X,A))$. 
We define the \emph{(shadow) quandle cocycle invariant} by
\[
\frac{1}{|X|} \sum_{\mathcal{S} : \textrm{shadow colorings}}\langle f, C(\mathcal{S})\rangle \in \mathbb{Z}[A].
\]
Here the sum is finite because there are only a finite number of shadow colorings of $D$.
This is an invariant of oriented knots by Proposition \ref{prop:not_depend_on_region_coloring}. 
If $G_X$ acts on $X$ transitively (i.e. $X$ is connected), $\langle f, C(\mathcal{S})\rangle$ does not depend on the choice of a region coloring $\mathcal{R}$ 
by Proposition \ref{prop:not_depend_on_region_coloring}, thus  
\[
\sum_{\tiny \begin{split} \mathcal{S} &= (\mathcal{A}, \mathcal{R}), \\ \mathcal{A} : & \textrm{arc coloring} \end{split}}\langle f, C(\mathcal{S})\rangle 
\]
coincides with the quandle cocycle invariant, 
where $\mathcal{R}$ is a region coloring compatible with $\mathcal{A}$.

We can regard $f \in C_Q^n(X ; A)$ as an element of $C_Q^{n-1}(X; \mathrm{Func}(X,A))$ by
\[
f(x_1, x_2, \dots, x_{n-1}) (r) = f(r,x_1,x_2,\dots,x_{n-1}).
\]
This gives a homomorphism $H_Q^n(X; A) \to H_Q^{n-1}(X; \mathrm{Func}(X,A))$.
Therefore a quandle $3$-cocycle $f \in H_Q^3(X; A)$ gives rise to a quandle cocycle invariant.
Explicitly, the cocycle invariant has the following form:
\[
\sum_{\tiny \begin{split} \mathcal{S} &= (\mathcal{A}, \mathcal{R}), \\ \mathcal{A} : & \textrm{arc coloring} \end{split}} 
\sum_{c: \mathrm{crossing}} \epsilon_c f(r_c, x_c, y_c) \in \mathbb{Z}[A]
\]
where $x_c, y_c \in X$ are given by $\mathcal{A}$ and $r_c \in X$ are given by $\mathcal{R}$.

\section{$H^{\Delta}_n(X; \mathbb{Z})$ and the map $H^R_n (X; \mathbb{Z}[X]) \to H^{\Delta}_{n+1}(X;\mathbb{Z})$}
\label{sec:main_map}
Let $X$ be a quandle.
Let $C^{\Delta}_n(X) = \mathrm{span}_{\mathbb{Z}} \{ (x_0, \dots , x_n) | x_i \in X \}$. 
We define the boundary operator of $C^{\Delta}_n(X)$ by 
\[
\partial (x_0,\dots, x_n)= \sum_{i=0}^n (-1)^i(x_0,\dots , \widehat{x_i}, \dots, x_n).
\]
Since $X$ has a right action of $G_X$, the chain complex $C^{\Delta}_n(X)$ has a right action of $G_X$ by $(x_0,\dots, x_n) * g = (x_0*g,\dots, x_n*g)$.
Let $M$ be a left $\mathbb{Z}[G_X]$-module.
We denote the homology of $C^{\Delta}_n(X) \otimes_{\mathbb{Z}[G_X]} M$ by $H^{\Delta}_n(X;M)$. 
For any abelian group $A$, we can also define the cohomology group $H_{\Delta}^n(X;A)$ in a similar way. 

Let $I_{n}$ be the set consisting of maps $\iota : \{ 1, 2, \cdots, n\} \rightarrow \{ 0, 1 \}$.
We let $|\iota|$ denote the cardinality of the set $\{ i \mid \iota(i) = 1, \, 1 \leq i \leq n \}$.
For each generator $r \otimes (x_{1}, x_{2}, \cdots, x_{n})$ of $C^{R}_{n}(X; \mathbb{Z}[X])$, where 
$r , x_1, \dots , x_n \in X$, we define
\[
\begin{split}
 r(\iota) & = r * (x_{1}^{\iota(1)} x_{2}^{\iota(2)} \cdots x_{n}^{\iota(n)}) \in X, \\
 x(\iota,i) & = x_{i} * (x_{i+1}^{\iota(i+1)} x_{i+2}^{\iota(i+2)} \cdots x_{n}^{\iota(n)}) \in X,
\end{split}
\]
for any $\iota \in I_n$.
Fix an element $q \in X$.
For each $n \geq 1$, we define a homomorphism
\[
 \varphi : C^{R}_{n}(X; \mathbb{Z}[X]) \longrightarrow C^{\Delta}_{n+1}(X) \otimes_{\mathbb{Z}[G_X]} \mathbb{Z}
\]
by
\begin{equation}
\label{eq:main_map}
\varphi( r \otimes (x_{1}, x_{2}, \cdots, x_{n})) = \sum_{\iota \in I_{n}} (-1)^{|\iota|} (q, r(\iota), x(\iota,1), x(\iota,2), \dots, x(\iota,n)).
\end{equation}
For example, in the case $n = 2$ (see Figure \ref{fig:main_map}),
\[
\begin{split}
 \varphi(r \otimes (x, y)) = (q, r, x, y) - (q, r * x, x, y) - (q, r * y, x * y, y) + (q, (r*x)*y, x* y, y),
\end{split}
\]
\begin{figure}
\input{main_map.pstex_t}
\caption{} 
\label{fig:main_map}
\end{figure}
and in the case $n=3$,
\[
\begin{split}
 \varphi(r \otimes &(x, y, z)) = \\
 & (q, r, x, y, z) - (q, r * x, x, y, z) \\ 
 - \> & (q,r * y, x * y, y, z) - (q, r * z, x * z, y * z, z) \\
 + \> & (q, (r * x)*y, x * y, y, z) + (q, (r* x) * z, x * z, y * z, z) \\
 + \> & (q, (r * y) * z, (x * y) * z, y * z, z) - (q, ((r * x)*y)*z, (x * y) * z, y * z, z).
\end{split}
\]
\begin{theorem}[\cite{inoue-kabaya}]
The map $ \varphi : C^{R}_{n}(X; \mathbb{Z}[X]) \longrightarrow C^{\Delta}_{n+1}(X) \otimes_{\mathbb{Z}[G_X]} \mathbb{Z}$ is a chain map.
\end{theorem}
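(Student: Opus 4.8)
The plan is to verify $\partial\circ\varphi=\varphi\circ\partial$ on each free generator $r\otimes(x_1,\dots,x_n)$ of $C^R_n(X;\mathbb{Z}[X])$; these generators form a $\mathbb{Z}$-basis, so there is nothing to check for the well-definedness of $\varphi$ itself. Abbreviating $\Delta(\iota)=(q,r(\iota),x(\iota,1),\dots,x(\iota,n))$, I would expand
\[
\partial\varphi(r\otimes(x_1,\dots,x_n))=\sum_{\iota\in I_n}(-1)^{|\iota|}\sum_{j=0}^{n+1}(-1)^j d_j\Delta(\iota),
\]
where $d_j$ deletes the $j$-th vertex, and then sort the resulting terms according to which vertex of $\Delta(\iota)$ is removed: the vertex $q$ (the case $j=0$), the vertex $r(\iota)$ (the case $j=1$), or a vertex $x(\iota,k)$ with $1\le k\le n$ (the case $j=k+1$).

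The $j=0$ and $j=1$ contributions should each vanish on their own. For $j=1$, deleting $r(\iota)$ leaves $(q,x(\iota,1),\dots,x(\iota,n))$, which does not involve the coordinate $\iota(1)$ at all; pairing each $\iota$ with the map obtained by flipping $\iota(1)$ cancels these terms formally, since $|\iota|$ changes parity. For $j=0$, deleting $q$ leaves $(r(\iota),x(\iota,1),\dots,x(\iota,n))$; here I would pair an $\iota$ with $\iota(n)=0$ against the $\iota^+$ with $\iota^+(n)=1$ agreeing otherwise, and check that appending the letter $x_n$ to the word defining each entry gives $d_0\Delta(\iota^+)=d_0\Delta(\iota)\ast x_n$ — this uses $x_n\ast x_n=x_n$, that is axiom (Q1), for the last entry — so the two faces become equal in $C^{\Delta}_n(X)\otimes_{\mathbb{Z}[G_X]}\mathbb{Z}$ while carrying opposite signs. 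Thus both families cancel.

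The heart of the argument is the remaining sum $\sum_{k=1}^n(-1)^{k+1}\sum_{\iota\in I_n}(-1)^{|\iota|}\,(q,r(\iota),x(\iota,1),\dots,\widehat{x(\iota,k)},\dots,x(\iota,n))$, which I would identify with $\varphi(\partial(r\otimes(x_1,\dots,x_n)))$ by splitting each inner sum according to the value of $\iota(k)$. When $\iota(k)=0$, the surviving vertices depend only on the coordinates $\iota(i)$ with $i\ne k$, and a direct inspection shows the face is exactly the term of $\varphi(r\otimes(x_1,\dots,\widehat{x_k},\dots,x_n))$ indexed by $\iota|_{\{1,\dots,n\}\setminus\{k\}}$. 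When $\iota(k)=1$, I would repeatedly invoke the commutation identity $x_k\cdot w=w\cdot(x_k\ast w)$ in $G_X$, valid for any positive word $w$, to push the letter $x_k$ to the right end of every word $x_1^{\iota(1)}\cdots x_k\cdots x_n^{\iota(n)}$ occurring among the entries; this rewrites $r(\iota)$ as $r(\iota_{k\to0})\ast x(\iota,k)$ and $x(\iota,i)$ as $x(\iota_{k\to0},i)\ast x(\iota,k)$ for $i<k$ (the entries with $i>k$ being untouched), and one then checks — using also that conjugation by $x_k$ carries $x_j$ to $x_j\ast x_k$, so that the word $\prod_{j<k}(x_j\ast x_k)\cdot\prod_{j>k}x_j$ reassembles correctly — that this face is precisely the term of $\varphi\big((r\ast x_k)\otimes(x_1\ast x_k,\dots,x_{k-1}\ast x_k,x_{k+1},\dots,x_n)\big)$ indexed by the same restriction of $\iota$. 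Matching the two halves against the two pieces of the rack boundary, with the signs bookkept via $|\iota|=|\iota_{k\to0}|+\iota(k)$, yields exactly $\varphi(\partial(r\otimes(x_1,\dots,x_n)))$, completing the proof.

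I expect the $\iota(k)=1$ case to be the main obstacle: one must organize the reindexing $I_n\leftrightarrow I_{n-1}$ and the signs with care, and one must verify that the iterated use of $x_k\cdot w=w\cdot(x_k\ast w)$ produces precisely the prescribed vertices of $\varphi$ on the translated generator. Implicit throughout is axiom (Q3), which is what makes the right $G_X$-action on $X$ well defined — it guarantees, for instance, that $r\ast(x_1x_2)$, computed as $(r\ast x_1)\ast x_2$, agrees with $r\ast\big(x_2(x_1\ast x_2)\big)=(r\ast x_2)\ast(x_1\ast x_2)$ after the commutation — while axiom (Q2) enters only through the inverses $\ast^{-1}$ needed to evaluate the action of arbitrary elements of $G_X$.
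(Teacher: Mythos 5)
Your verification is essentially correct, and it is worth noting that the paper itself gives no proof of this theorem --- it is imported from \cite{inoue-kabaya} --- so your direct computation is a genuine supplement rather than a rival to an argument in the text. All the key mechanisms are in place and check out: the $d_1$-faces cancel because no $x(\iota,i)$ depends on $\iota(1)$; the $d_0$-faces cancel only after passing to the coinvariants $C^{\Delta}_n(X)\otimes_{\mathbb{Z}[G_X]}\mathbb{Z}$, via the diagonal action of $x_n$, and you correctly isolate the use of (Q1) in the last entry; and the identification of the $\iota(k)=1$ faces with the terms of $\varphi$ on $(r*x_k)\otimes(x_1*x_k,\dots,x_{k-1}*x_k,x_{k+1},\dots,x_n)$ follows exactly as you say from pushing $x_k$ through the word using $x_i x_k = x_k(x_i*x_k)$ in $G_X$. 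I confirmed the whole scheme explicitly for $n=2$.

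One caveat you should resolve before calling the proof finished: with the boundary operators exactly as printed in this paper ($\partial(x_1,\dots,x_n)=\sum_{i=1}^n(-1)^i(\cdots)$ on $C^R_n$ and the standard alternating sum on $C^{\Delta}_{n+1}$), the identity that actually comes out of your bookkeeping is $\partial\circ\varphi=-\varphi\circ\partial$, not $\partial\circ\varphi=\varphi\circ\partial$. Concretely, for $n=2$ the $d_2$-faces of $\partial\varphi(r\otimes(x,y))$ are $(q,r,y)-(q,r*x,y)-(q,r*y,y)+(q,(r*x)*y,y)$, whereas the corresponding piece of $\varphi\partial(r\otimes(x,y))$ is the negative of this. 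This is a global sign --- it is uniform in $k$ and $\iota$ because your own relation $|\iota|=|\iota_{k\to 0}|+\iota(k)$ shifts both halves by the same unit --- so it is harmless for everything the theorem is used for (a map anticommuting with $\partial$ still sends cycles to cycles and induces $\varphi_*$ on homology), and it is the sign one expects of a degree-one map under the Koszul convention; it may simply reflect a different sign convention in \cite{inoue-kabaya}. But as written, your final sentence claiming the sum ``yields exactly $\varphi(\partial(r\otimes(x_1,\dots,x_n)))$'' is off by this overall sign and should either be corrected or accompanied by a remark fixing the convention.
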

Therefore $\varphi$ induces a homomorphism $\varphi_* : H^R_n(X; \mathbb{Z}[X]) \to H^{\Delta}_{n+1}(X;\mathbb{Z}) $.
We remark that the induced map $\varphi_* : H^R_n(X; \mathbb{Z}[X]) \to H^{\Delta}_{n+1}(X;\mathbb{Z}) $ does not depend on the choice of $q \in X$ (\cite{inoue-kabaya}).

In general, it is easier to construct cocycles of $H^{\Delta}_{n+1}(X)$ from group cocycles of some group related to $X$ than those of $H^R_n(X; \mathbb{Z}[X])$.
If we have a function $f$ from $X^{k+1}$ to some abelian group $A$ satisfying 
\begin{enumerate}
\item $\displaystyle\sum_{i=0}^{k+1} (-1)^i f(x_0, \dots, \widehat{x_i}, \dots ,x_{k+1}) = 0$, 
\item $f(x_0 * y, \dots, x_k * y) =  f(x_0, \dots, x_k)$ for any $y \in X$, 
\item $f(x_0, \dots , x_{k} ) = 0$ if $x_i=x_{i+1}$ for some $i$, 
\end{enumerate}
then $f$ is a cocycle of $H_{\Delta}^{k}(X;A)$ and $\varphi^* f$ is a cocycle of $H_Q^{k-1}(X; \mathrm{Func}(X,A))$.
Moreover, $\varphi^* f$ can be regarded as a cocycle in $H_Q^k(X; A)$ by 
\[
(\varphi^* f) (r, x_1, \dots, x_{k-1})  = (\varphi^* f) (x_1, \dots, x_{k-1})(r).
\]
We will construct functions satisfying these three conditions from group cocycles.

\section{Cocycles of dihedral quandles}
\label{sec:dihedral}
For any integer $p>2$,
let $R_p$ denote the cyclic group $\mathbb{Z}/p$ with quandle operation defined by $x*y = 2y-x \mod p$. 
Actually this operation satisfies the quandle axioms. 
The quandle $R_p$ is called the \emph{dihedral quandle}.
In this section, we construct quandle cocycles of $R_p$ from group cocycles of $G=\mathbb{Z}/p$.
In the next section, we will propose a general construction of quandle cocycles from group cocycles.

\subsection{Group cohomology of cyclic groups}
Let $G$ be the cyclic group $\mathbb{Z}/p$ ($p$ is an integer greater than $2$).
The first cohomology $H^1(G; \mathbb{Z}/p) = \mathrm{Hom}(\mathbb{Z}/p, \mathbb{Z}/p)$ is generated by the 1-cocycle $b_1$ defined by
\[
b_1(x) = x.
\]
The connecting homomorphism $\delta : H^1(G, \mathbb{Z}/p) \to H^2(G; \mathbb{Z})$ of the long exact sequence corresponding to
$0 \to \mathbb{Z} \to \mathbb{Z} \to \mathbb{Z}/p \to 0$ maps $b_1$ to a generator of $H^2(G; \mathbb{Z})$, 
and the reduction $H^2(G; \mathbb{Z}) \to  H^2(G; \mathbb{Z}/p)$ maps it to a generator $b_2$ of $H^2(G; \mathbb{Z}/p)$.
Explicitly we have
\begin{equation}
\label{eq:2_cocycle_of_cyclic_group}
b_2(x,y) = \frac{1}{p} (\overline{y} - \overline{x+y} + \overline{x})
=
\left\{ 
\begin{array}{ll}
1 & \textrm{if $\overline{x}+\overline{y} \geq p$} \\  
0 & \textrm{otherwise}
\end{array} \right. 
\end{equation}
where $\overline{x}$ is an integer $0 \leq \overline{x} < p$ such that $\overline{x} \equiv x \mod p$.
Cup products of $b_1$'s and $b_2$'s are also cocycles.
Moreover, when $p$ is an odd prime, it is known that 
any element of $H^*(G; \mathbb{Z}/p)$ can be presented by a cup product of $b_1$'s and $b_2$'s, see e.g. \cite[Proposition 3.5.5]{benson}.
We remark that $b_1$ and $b_2$ and their products are normalized cocycles.

\subsection{Cocycle of $R_p$}
\label{subsec:3_cocycle_of_dihedral}
For an integer $p > 2$, let $f$ be a normalized $k$-cocycle of $H^k(G, \mathbb{Z}/p)$.
Regarding $R_p$ as $G = \mathbb{Z}/p$, 
we obtain a map $f: (R_p)^{k+1} \to \mathbb{Z}/p$ satisfying 
\begin{enumerate}
\item[(1)] $\displaystyle\sum_{i=0}^{k+1} (-1)^i f(x_0, \dots, \widehat{x_i}, \dots ,x_{k+1}) = 0$, 
\item[(3)] $f(x_0, \dots , x_{k} ) = 0$ if $x_i=x_{i+1}$ for some $i$,
\end{enumerate}
by using the homogeneous notation (\S \ref{subsec:group_homology}).
If $f$ also satisfies the condition 
\begin{enumerate}
\item[(2)] $f(x_0 * y, \dots, x_k * y) =  f(x_0, \dots, x_k)$ for any $y \in R_p$,
\end{enumerate}
then $f$ gives rise to a quandle $k$-cocycle of $H_Q^k(R_p; \mathbb{Z}/p)$ by the construction of Section \ref{sec:main_map}.  
Define $\tilde{f}: (R_p)^{k+1} \to \mathbb{Z}/p$ by
\begin{equation}
\label{eq:average_of_cocycle}
\tilde{f}(x_0, \dots, x_k) = f(x_0, \dots, x_k) + f(-x_0,\dots ,-x_k).
\end{equation}
Then $\tilde{f}$ satisfies the condition (2) by the left invariance of the homogeneous group cocycle.
It is easy to check that $\tilde{f}$ also satisfies the condition (1) and (3).
So we obtain a quandle $k$-cocycle.

We give an explicit presentation of the $3$-cocycle arising from $b_1 b_2 \in H^3(G; \mathbb{Z}/p)$.
Let 
\[
d(x,y)= b_2(x,y) - b_2(-x,-y)  
\]
then $d$ is a 2-cocycle. (We can check that $d$ is cohomologous to $2 b_2$.)
Then by the definition (equation (\ref{eq:average_of_cocycle})), $\widetilde{b_1b_2}$ is given by 
\[
[x|y|z] \mapsto x \cdot d(y,z).
\]
By definition we have
\begin{equation}
\label{eq:symmetric}
d(-x,-y) = -d(x,y)
\end{equation}
and 
\begin{equation}
\label{eq:formula_of_d}
d(x,y) =  \left\{ 
\begin{array}{ll}
1  & \textrm{if $\overline{x} + \overline{y} > p$} \\  
-1 & \textrm{if $\overline{x} + \overline{y} < p$, $x \neq 0$ and $y \neq 0$} \\
0  & \textrm{otherwise}. \\
\end{array} \right. 
\end{equation}
We remark that the cocycle $d$ can be easily understood geometrically.
Identify $i \in \mathbb{Z}/p$ with the complex number $\zeta^i$ where $\zeta = \exp (2 \pi \sqrt{-1}/ p)$.  
Then $d(x,y)=-1$ if $(0,x,x+y)$ is counterclockwise, $d(x,y) = +1$ if $(0,x,x+y)$ is clockwise 
and $d(x,y)=0$ if $(0,x,x+y)$ is degenerate (Figure \ref{fig:2-cocycle}). 
This interpretation and the equation (\ref{eq:symmetric}) make various calculations easy.
\begin{figure}
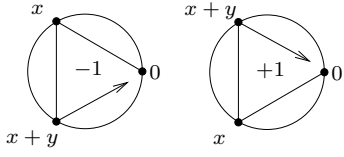

\input{2-cocycle_neg.pstex_t}
\input{2-cocycle_pos.pstex_t}
\caption{The value of $d(x,y)$.}
\label{fig:2-cocycle}
\end{figure}

\begin{proposition}
The quandle $3$-cocycle arising from $b_1 b_2 \in H^3(G; \mathbb{Z}/p)$ has the following presentation:
\begin{equation}
\label{eq:the_cocycle}
(x,y,z) \mapsto 2z (d(y-x,z-y) + d(y-x,y-z) )  \quad (x,y,z \in R_p).
\end{equation}
This is a non-trivial quandle 3-cocycle of $R_p$ with $\mathbb{Z}/p$ coefficient.
\end{proposition}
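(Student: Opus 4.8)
The plan is to (i) derive the closed formula (\ref{eq:the_cocycle}) by a direct computation, (ii) observe that the result is automatically a cocycle, and (iii) prove non-triviality, which is the real content. For (i), I would first write the averaged group $3$-cocycle $f := \widetilde{b_1 b_2}$ in homogeneous form: by the dictionary of \S\ref{subsec:group_homology}, the inhomogeneous expression $[x|y|z] \mapsto x\, d(y,z)$ corresponds to
\[
f(x_0,x_1,x_2,x_3) = (x_1-x_0)\, d(x_2-x_1,\, x_3-x_2).
\]
Since $\varphi_*$ is independent of the auxiliary point (the theorem of \cite{inoue-kabaya}), I would take $q=0$ and substitute $f$ into the $n=2$ case of $\varphi$, so that
\[
(\varphi^* f)(x,y,z) = f(0,x,y,z) - f(0,x*y,y,z) - f(0,x*z,y*z,z) + f(0,(x*y)*z,y*z,z).
\]
Expanding each of the four terms with $a*b = 2b-a$, and then normalizing the $d$-arguments that appear using only (\ref{eq:symmetric}), the four summands become multiples of $d(y-x,z-y)$ and $d(y-x,y-z)$, and in each case the coefficients collapse to $2z$, yielding exactly $2z\big(d(y-x,z-y)+d(y-x,y-z)\big)$. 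This is routine sign-bookkeeping, best displayed as a short table of the four terms.

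\emph{Cocycle property.} Since $b_1$ and $b_2$ are normalized group cocycles, so is the cup product $b_1 b_2$ and hence its average $\widetilde{b_1 b_2}$; thus $\widetilde{b_1 b_2}$ satisfies conditions (1) and (3) of \S\ref{subsec:3_cocycle_of_dihedral}, and condition (2) holds by left-invariance of a homogeneous cocycle together with the averaging, as in the text. By the Inoue--Kabaya chain-map theorem, $\varphi^*(\widetilde{b_1 b_2})$ is then a quandle $3$-cocycle in $H^3_Q(R_p;\mathbb{Z}/p)$.

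\emph{Non-triviality.} This is the main obstacle. It suffices to exhibit one cycle on which $\varphi^*(\widetilde{b_1 b_2})$ is non-zero. Taking a diagram $D$ together with a non-trivial $R_p$-shadow coloring $\mathcal{S}$, one has $\langle \varphi^*(\widetilde{b_1 b_2}), C(\mathcal{S})\rangle = \langle \widetilde{b_1 b_2}, \varphi_*(C(\mathcal{S}))\rangle$, and (cf.\ the dual construction of Section \ref{sec:quandle_cyc_as_cyc_bra_cov}) $\varphi_*(C(\mathcal{S}))$ represents a non-zero multiple of the push-forward of the fundamental class of the associated double branched cover under the $\mathbb{Z}/p$-representation induced by $\mathcal{S}$; hence the pairing equals a non-zero multiple of the value of the generator $b_1 b_2$ of $H^3(\mathbb{Z}/p;\mathbb{Z}/p)\cong\mathbb{Z}/p$ on that class. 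For $p$ odd I would take $D$ to be the standard diagram of the $(2,p)$-torus knot, whose double branched cover is a lens space with fundamental group $\mathbb{Z}/p$, so the pairing is a unit times a generator and therefore non-zero; concretely one can simply evaluate the right-hand side of (\ref{eq:the_cocycle}) on the explicit $2$-cycle read off from this diagram. The delicate point is uniformity in $p$: when $p$ is even no knot admits a non-trivial $R_p$-coloring (knot determinants are odd) and the factor $2$ in (\ref{eq:the_cocycle}) is a zero-divisor, so for even $p$ I would instead pair against the cycle coming from a $2$-component link — e.g.\ the torus link $T(2,p)$, whose double branched cover is again a lens space — or, purely homologically, evaluate (\ref{eq:the_cocycle}) directly on an explicit generator of $H^Q_3(R_p;\mathbb{Z}/p)$. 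In every case the argument reduces to the elementary fact that $b_1 b_2$ generates $H^3(\mathbb{Z}/p;\mathbb{Z}/p)$.
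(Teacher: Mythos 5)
Your part (i) is exactly the paper's computation: take $q=0$, expand the four terms of $\varphi$ using $a*b=2b-a$, and use $d(-u,-v)=-d(u,v)$ to collect them into $2z\bigl(d(y-x,z-y)+d(y-x,y-z)\bigr)$; the coefficient bookkeeping you describe does close up ($x+(2z-x)=2z$ and $(2y-x)+(2z-2y+x)=2z$), and the cocycle property is indeed automatic from the general construction of Sections \ref{sec:main_map} and \ref{sec:dihedral}.

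The divergence, and the gap, is in the non-triviality step. The paper does \emph{not} route through the branched-cover machinery: it evaluates the explicit formula (\ref{eq:the_cocycle}) directly on the shadow-coloring cycle $C(\mathcal{S})$ of the $(2,p)$-torus link (normalized to $r=x=0$), reduces the pairing to the elementary sum $\sum_{i=0}^{p-1} i\,d(iy,y)$, and computes that sum in a separate lemma (it equals $-y$ for $p$ odd and $p/2-y$ for $p$ even), obtaining the explicit value $-4y^2 \bmod p$, which is non-zero for $y=1$ and $p$ odd. Your alternative --- identifying $\varphi_*(C(\mathcal{S}))$ with ``a non-zero multiple'' of the pushed-forward fundamental class of the double branched cover $L(p,1)$ and invoking the non-vanishing of $b_1b_2$ there --- leaves precisely that multiple unverified, and it is doing all the work. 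Theorem \ref{thm:main_branched} only asserts that the \emph{sum} $\iota\varphi(C(\mathcal{S}))+\iota\varphi(C(\mathcal{S}*a))$ represents $L(p,1)$; recovering the single pairing $\langle \widetilde{b_1b_2},\varphi(C(\mathcal{S}))\rangle$ costs a factor of $2$, the averaged cocycle $\widetilde{b_1b_2}$ restricts to (a class cohomologous to) $2\,b_1b_2$ on $\mathbb{Z}/p$, and the holonomy parameter of the induced $\mathbb{Z}/p$-representation differs from the coloring parameter $y$ by another factor. The product of these constants is exactly the $-4y^2$ that the paper computes by hand; each factor must be checked to be a unit mod $p$, and none of this is in your sketch.

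Your concern about even $p$ is legitimate, but your proposed remedies do not resolve it. The paper's diagram already \emph{is} the $2$-component torus link $T(2,p)$ when $p$ is even (the $p$-crossing closed $2$-braid), and the evaluation $-4y^2$ is carried out there for all $p>2$, so passing from knot to link changes nothing; and ``evaluate on an explicit generator of $H^Q_3(R_p;\mathbb{Z}/p)$'' is not an argument until such a generator is produced. Note moreover that at $p=4$ the value $-4y^2$ vanishes identically, so this particular cycle cannot detect non-triviality there; a genuinely different cycle or argument would be needed for that case, which neither your sketch nor the torus-link computation supplies.
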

\begin{proof}
In (\ref{eq:main_map}), since the map $\varphi_*$ does not depend on the choice of $q \in R_p$, 
we let $q=0$. Then we have
\[
\begin{split}
\varphi(x&,y,z) = (0,x,y,z) - (0,x*y,y,z) - (0,x*z, y*z, z) + (0,(x*y)*z, y*z, z) \\
=& (0,x,y,z) - (0,2y-x, y,z) - (0,2z-x,2z-y,z) + (0,2z-2y+x,2z-y,z) \\
\end{split}
\] 
for $x,y,z \in R_p$.
Rewriting in inhomogeneous notation, this is equal to
\[
\begin{split}
& [x| y-x|z-y] - [2y-x|x-y|z-y] \\ 
&- [2z-x| x-y|y-z] + [2z-2y+x| y-x|y-z]. \\
\end{split}
\]
The evaluation of $\widetilde{b_1b_2}$ on this cycle is 
\[
\begin{split}
& x \cdot d(y-x,z-y) - (2y-x) d(x-y,z-y) \\ 
&- (2z-x) d(x-y,y-z) + (2z-2y+x) d(y-x,y-z) \\
=& x \cdot d(y-x,z-y) + (2y-x) d(y-x,y-z)  \\
&+ (2z-x) d(y-x,z-y) + (2z-2y+x) d(y-x,y-z) \\
=& 2z \cdot d(y-x,z-y) + 2z \cdot d(y-x,y-z). \\
\end{split}
\]

We will see that this cocycle is non-trivial because the evaluation on the cycle given by a shadow coloring $\mathcal{S}$ 
of the $(2,p)$-torus link (Figure \ref{fig:(2,p)-torus}) is non-zero.
Color two arcs by $x, y \in R_p$ as in the Figure \ref{fig:(2,p)-torus}.
Then other arcs must be colored by $(i+1) y - ix$ by the relations at the crossings.
Let $r$ be the color of the central region.
Then we have
\[
C(\mathcal{S}) = \sum_{i=0}^{p-1} r \otimes (iy -(i-1)x, (i+1)y - ix ) .
\]
We assume that $r=0$ and $x=0$.
Evaluation of the cocycle on $C(\mathcal{S})$ is equal to
\[
\begin{split}
&  \sum_{i=0}^{p-1} 2 (i+1) y  ( d(iy, y) + d(iy, -y)) \\ 
&= 2y \sum_{i=0}^{p-1} (i+1)( d(iy, y) - d(-iy,y) ) \quad  \textrm{( by (\ref{eq:symmetric}) )} \\
&= 2y \sum_{i=0}^{p-1} ( (i+1)d(iy, y) + (i-1)d(iy,y) ) \\
&= 4y \sum_{i=0}^{p-1} i \cdot d(iy, y).  \\
\end{split}
\]
By the next lemma, this is equal to $- 4 y^2  \mod p$.
\begin{figure}
\input{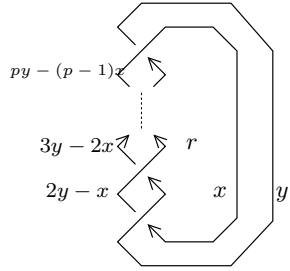}
\caption{A shadow coloring of $(2,p)$-torus knot by $R_p$. (For any $x,y, r \in R_p$.)}
\label{fig:(2,p)-torus}
\end{figure}
\end{proof}

\begin{lemma}
Let $p > 2$ be an integer. For $0 < y < p$, we have 
\[
\sum_{i=0}^{p-1} i \cdot d(iy, y) = 
\left\{ 
\begin{array}{ll}
-y & \textrm{$p$: odd}  \\
p/2-y & \textrm{$p$: even} \\
\end{array} \right. 
\]
When $y=0$, the left hand side is $0$.
\end{lemma}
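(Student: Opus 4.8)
The plan is to collapse the sum to a telescoping computation via the explicit form of the group $2$-cocycle $b_2$. First, the case $y = 0$ is immediate: then $iy \equiv 0$ for every $i$, so $d(iy, y) = d(0, 0) = 0$ by (\ref{eq:formula_of_d}) and the sum vanishes. So assume $0 < y < p$, and write $\overline{a}$ for the representative of $a \in \mathbb{Z}/p$ lying in $\{0, 1, \dots, p-1\}$, so that $\overline{y} = y$ and $\overline{-y} = p - y$. Using the defining relation $d(x,y) = b_2(x,y) - b_2(-x,-y)$ together with formula (\ref{eq:2_cocycle_of_cyclic_group}), I would rewrite
\[
p\, d(iy, y) = \bigl(\overline{iy} - \overline{(i+1)y} + \overline{y}\bigr) - \bigl(\overline{-iy} - \overline{-(i+1)y} + \overline{-y}\bigr),
\]
then multiply by $i$ and sum over $i = 0, 1, \dots, p-1$.

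This expresses $p \sum_{i=0}^{p-1} i\, d(iy,y)$ as the sum of a ``constant'' part $(\overline{y} - \overline{-y})\sum_{i=0}^{p-1} i = (2y - p)\tfrac{p(p-1)}{2}$ and two telescoping parts $\sum_{i=0}^{p-1} i\bigl(\overline{iy} - \overline{(i+1)y}\bigr)$ and $-\sum_{i=0}^{p-1} i\bigl(\overline{-iy} - \overline{-(i+1)y}\bigr)$. Abel summation collapses each telescoping part: for instance the first equals $\sum_{j=1}^{p-1}\overline{jy}$, the only boundary term being $-(p-1)\overline{py} = 0$. Hence the two telescoping parts together contribute $\sum_{j=1}^{p-1}\bigl(\overline{jy} - \overline{-jy}\bigr)$, and this is $0$ because $j \mapsto -j$ permutes $\{1, \dots, p-1\}$. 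Therefore
\[
\sum_{i=0}^{p-1} i\, d(iy, y) = \frac{(p-1)(2y - p)}{2}.
\]

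Finally I would reduce this integer modulo $p$. When $p$ is odd, $(p-1)(2y - p) \equiv -2y$ and, since $2$ is a unit in $\mathbb{Z}/p$, the expression is $\equiv -y$. When $p$ is even, $\tfrac{(p-1)(2y-p)}{2} = (p-1)\bigl(y - \tfrac{p}{2}\bigr) \equiv \tfrac{p}{2} - y \pmod p$. These are precisely the asserted values; note that the displayed equality is an identity of integers, and it is only its class modulo $p$ that enters the proof of the Proposition, which is what the statement records.

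I expect the only mildly delicate points to be the Abel-summation bookkeeping — tracking the index shift and checking that the surviving boundary term is a multiple of $\overline{py} = 0$ — and the even-$p$ case of the reduction, where $\tfrac{p(p-1)}{2}$ is itself not divisible by $p$. One could instead argue from the geometric description of $d$ (the cyclic position of $0$, $iy$, $(i+1)y$ on the circle), but this telescoping argument seems the most economical, and in particular it never uses that $p$ is prime.
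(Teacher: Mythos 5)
Your proof is correct, and it takes a genuinely different route from the paper's. The paper evaluates the sum directly from the explicit $\pm 1$ formula (\ref{eq:formula_of_d}): for $y$ a unit it reindexes $i \mapsto i/y$, identifies the ranges where $d(i,y)=\pm 1$, and sums two arithmetic progressions; for $y$ not a unit it reduces to the same computation in $\mathbb{Z}/(p/c)$ with $c=\gcd(y,p)$. You instead go back to the definition $d(x,y)=b_2(x,y)-b_2(-x,-y)$ and the coboundary form (\ref{eq:2_cocycle_of_cyclic_group}) of $b_2$, so that $p\sum_i i\,d(iy,y)$ splits into a constant part and two telescoping parts; Abel summation kills the latter (the boundary terms are multiples of $\overline{py}=0$, and the surviving sums cancel under $j\mapsto p-j$), giving the exact integer identity $\sum_{i=0}^{p-1} i\,d(iy,y)=\tfrac{(p-1)(2y-p)}{2}$, whose residue mod $p$ is $-y$ or $p/2-y$ as claimed. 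I checked the bookkeeping (including small cases such as $p=3,\,y=2$ and $p=4,\,y=1$) and it is right. What your approach buys is uniformity: there is no unit/non-unit case split, no appeal to the structure of $\mathbb{Z}/(p/c)$, and you get a closed form over $\mathbb{Z}$ before any reduction. What the paper's approach buys is a direct count that matches the geometric picture of $d$ in Figure \ref{fig:2-cocycle}. One small caution on wording: the lemma's displayed equation itself holds only modulo $p$ (e.g.\ $p=3$, $y=2$ gives the integer $1$, not $-2$), which is how both your argument and the paper's use it; your phrase ``identity of integers'' correctly refers to your intermediate closed form, but it would be worth saying so explicitly to avoid the misreading.
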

\begin{proof}
When $y=0$, this is straightforward since $d(x,0) = 0$ for any $0 \leq x < p$.
When $y$ is a unit in $\mathbb{Z}/p$, by (\ref{eq:formula_of_d}) we have
\[
\begin{split}
\sum_{i=0}^{p-1} i \cdot d(iy, y) & = \sum_{i=0}^{p-1} \frac{i}{y} \cdot d(i, y) \\
& = \frac{1}{y} ( -1 -2 - \dots - (p-y-1) \\ 
   & \quad \quad + (p-y+1) + (p-y+2) + \dots  + (p-1) ) \\
& = \frac{1}{y} \left( \frac{(p-y-1)(y-p)}{2} + \frac{(y-1)(2p-y)}{2} \right) \\
& = \frac{1}{y} \left( - y^2 -\frac{p}{2} + 2py -  \frac{p^2}{2} \right) \\
& \equiv -y - \frac{p}{2} \cdot \frac{1}{y} \\
\end{split}
\]
This is equal to $-y$ when $p$ is odd.
When $p$ is even, $- p/2 \cdot 1/y  \equiv p/2 \mod p$ since $1/y$ is a unit in $\mathbb{Z}/p$.

When $y$ is not a unit in $\mathbb{Z}/p$, let $c$ be the greatest common divisor of $y$ and $p$.
Then 
\[
\begin{split}
\sum_{i=0}^{p-1} i \cdot d(iy, y) & = c  \sum_{j=0}^{p/c-1} \frac{j}{y/c} \cdot d(jc, y) \\
& = \frac{c}{y/c} \sum_{j=0}^{p/c-1} j \cdot d(j, y/c). \\
\end{split}
\]
Since $y/c$ is a unit in $\mathbb{Z}/(p/c)$, this reduces to the previous case.
\end{proof}

Since $2$ is divisible in $\mathbb{Z}/p$ when $p$ is odd, we have
\begin{corollary}
When $p > 2$ is an odd number, 
\begin{equation}
(x,y,z) \mapsto z (d(y-x,z-y) + d(y-x,y-z) )
\end{equation}
is a non-trivial quandle 3-cocycle of $R_p$ with $\mathbb{Z}/p$ coefficient.
\end{corollary}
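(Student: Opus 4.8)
The plan is to obtain the corollary from the Proposition by a rescaling argument. Since $p$ is odd we have $\gcd(2,p)=1$, so $2$ is a unit in $\mathbb{Z}/p$; write $2^{-1}$ for its inverse. The function
\[
(x,y,z) \mapsto z\,\big(d(y-x,z-y) + d(y-x,y-z)\big)
\]
is precisely $2^{-1}$ times the quandle $3$-cocycle produced in the Proposition. Multiplication by a fixed element of $\mathbb{Z}/p$ is a $\mathbb{Z}/p$-linear endomorphism of the cochain group $C_Q^3(R_p;\mathbb{Z}/p)$ which commutes with the coboundary operator, hence carries cocycles to cocycles and coboundaries to coboundaries. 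Therefore the displayed map is again a quandle $3$-cocycle of $R_p$ with $\mathbb{Z}/p$ coefficients.

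For non-triviality I would argue by contradiction: if the displayed cocycle were a coboundary, then so would be $2$ times it, namely the Proposition's cocycle, contradicting the non-triviality established there. Equivalently, one may simply rerun the computation in the proof of the Proposition on the cycle $C(\mathcal{S})$ associated with a shadow coloring of the $(2,p)$-torus knot: with the normalization $r=x=0$ the pairing becomes $2^{-1}$ of the previously computed value $-4y^2$, i.e.\ $-2y^2 \bmod p$, which is nonzero as soon as $y$ is a unit of $\mathbb{Z}/p$ (again using that $p$ is odd, so that $-2y^2$ is then a unit).

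I do not expect any genuine obstacle here: the entire content of the corollary is that $2$ is invertible modulo an odd $p$, which lets us divide the Proposition's cocycle by $2$ without leaving $H_Q^3(R_p;\mathbb{Z}/p)$. The only point that deserves an explicit remark is that scaling a cocycle by a unit preserves both the cocycle condition and the property of not being a coboundary, so the cohomology class of the rescaled cocycle is again nonzero.
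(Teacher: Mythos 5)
Your proposal is correct and matches the paper's argument, which likewise just observes that $2$ is invertible in $\mathbb{Z}/p$ for odd $p$ and divides the cocycle of the Proposition by $2$; scaling by a unit preserves both cocycles and non-coboundaries. No further comment is needed.
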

When $p$ is prime, 
it is known that $\dim_{\mathbb{F}_p} H_Q^3(R_p; \mathbb{F}_p) = 1$.
Therefore our cocycle is a constant multiple of the Mochizuki 3-cocycle \cite{mochizuki}.
We remark that when $p$ is prime, $\dim_{\mathbb{F}_p} H_Q^n(R_p; \mathbb{F}_p)$ was calculated by Nosaka \cite{nosaka_alexander} for any $n$,
moreover he gave a system of generators of $H_Q^n(R_p; \mathbb{F}_p)$.

When $p$ is an odd integer, Nosaka showed in \cite{nosaka_surface} that $H^Q_3(R_p; \mathbb{Z}) \cong \mathbb{Z}/p$.
Since $H^Q_2(R_p; \mathbb{Z})$ is zero, we have $H_Q^3(R_p; \mathbb{Z}/p) \cong \mathbb{Z}/p$. 
This means that there exists a non-trivial quandle 3-cocycle of $R_p$ with $\mathbb{Z}/p$ coefficient.

\section{General construction}
\label{sec:gen_const}
In this section, we generalize the construction in the previous section to wider classes of quandles. 
We will construct a quandle cocycle of a faithful homogeneous quandle $X$ from a group cocycle of $\mathrm{Aut}(X)$
when an obstruction living in the second cohomology of $\mathrm{Aut}(X)$ vanishes.

\subsection{}
Let $G$ be a group. Fix an element $h \in G$.
Let $\Conj(h) = \{ g^{-1} h g | g \in G\}$.
Now $\Conj(h)$ has a quandle operation by $x*y = y^{-1} x y$.
In this section, we shall construct a quandle cocycle of $\Conj(h)$ from group cocycle of $G$.
First we shall show that this class of quandles is not so special.

Let $X$ be a quandle.
We denote the group of the quandle automorphisms of $X$ by $\mathrm{Aut}(X)$.
We regard an automorphism acts on $X$ from the right.
For $x \in X$, let $S(x)$ be the map which sends $y$ to $y*x$.
By the axioms (Q2) and (Q3), $S(x)$ is a quandle automorphism.
$X$ is called \emph{faithful} if $S: X \to \mathrm{Aut}(X)$ is injective.
A quandle $X$ is \emph{homogeneous} if $\mathrm{Aut}(X)$ acts on $X$ transitively.
The following lemma was essentially shown in Theorem 7.1 of \cite{joyce}, but we include a proof for completeness.
\begin{lemma}
Every faithful homogeneous quandle $X$ is represented by $\Conj(h)$ with some group $G$ and $h \in G$.
\end{lemma}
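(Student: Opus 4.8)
The plan is to take the quandle $X$ and realize it as $\Conj(h)$ inside its own automorphism group $G = \mathrm{Aut}(X)$. Fix a base point $x_0 \in X$ and set $h = S(x_0)$. Since $X$ is faithful, $S : X \to \mathrm{Aut}(X)$ is injective, so it suffices to show that the image $S(X)$ equals the conjugacy class $\Conj(h) = \{ g^{-1} h g \mid g \in \mathrm{Aut}(X)\}$ and that $S$ is a quandle homomorphism onto it. The key computational identity is the way automorphisms interact with $S$: for any $\sigma \in \mathrm{Aut}(X)$ and $x \in X$, one has $\sigma^{-1} S(x) \sigma = S(x\sigma)$, because applying the left-hand side to $y$ gives $(y \sigma^{-1} * x)\sigma = (y\sigma^{-1})\sigma * x\sigma = y * x\sigma$, using that $\sigma$ is an automorphism (so it commutes with $*$ appropriately). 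This identity does all the work.

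First I would use that identity to show $S$ is a quandle homomorphism from $X$ to $\Conj(h)$ (with the conjugation quandle operation on $\Conj(h)$): indeed $S(x*y) = S(y)^{-1} S(x) S(y)$ is exactly the displayed identity with $\sigma = S(y)$, noting $x \cdot S(y) = x * y$. Second, I would check that the image of $S$ lands in $\Conj(h)$ and in fact is all of it: since $X$ is homogeneous, for any $x \in X$ there is $\sigma \in \mathrm{Aut}(X)$ with $x_0 \sigma = x$, and then $S(x) = S(x_0 \sigma) = \sigma^{-1} S(x_0) \sigma = \sigma^{-1} h \sigma \in \Conj(h)$; conversely every element of $\Conj(h)$ is of the form $\sigma^{-1} h \sigma = \sigma^{-1} S(x_0) \sigma = S(x_0 \sigma)$, hence in the image. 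Third, faithfulness gives that $S$ is injective, so $S$ is a bijective quandle homomorphism $X \to \Conj(h)$, i.e.\ a quandle isomorphism, with $G = \mathrm{Aut}(X)$ and $h = S(x_0)$.

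I do not expect a serious obstacle here: the only thing to be careful about is the bookkeeping of left-versus-right actions, since the paper takes automorphisms acting from the right, and getting the conjugation convention on $\Conj(h)$ to match the convention $x*y = y^{-1} x y$ used elsewhere. One should double-check that with these conventions $S$ indeed sends $*$ to conjugation rather than to its inverse; if a sign/inverse mismatch appears one replaces $h = S(x_0)$ by its inverse or swaps the order, which is purely cosmetic. The genuinely substantive input — that $\Conj(h)$ is a broad enough class — is supplied entirely by faithfulness (injectivity of $S$) and homogeneity (transitivity of $\mathrm{Aut}(X)$, used to hit every element of the conjugacy class), exactly as in Joyce's Theorem 7.1.
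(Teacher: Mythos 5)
Your proposal is correct and follows essentially the same route as the paper: the identity $S(xg)=g^{-1}S(x)g$, homogeneity for surjectivity onto $\Conj(S(x_0))$, and faithfulness for injectivity. You merely spell out the quandle-homomorphism property (the case $g=S(y)$) more explicitly than the paper does, which is a harmless elaboration.
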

\begin{proof}
For $x \in X$ and $g \in \mathrm{Aut}(X)$, we have $S(xg) = g^{-1} S(x) g$.
In fact, $(y)S(xg) = y*(xg) = (yg^{-1}*x)g = (y) g^{-1}S(x)g$ for any $y \in X$.

Let $G=\mathrm{Aut}(X)$ and fix an element $x_0 \in X$.
Put $h=S(x_0)$.
Because $X$ is homogeneous, for any $x \in X$, there exists $g \in G$ such that $x = x_0 g$.  
So we have $S(x) = S(x_0 g) =g^{-1} h g$, namely $S(x) \in \Conj(h)$.
Therefore we obtained a homomorphism $S: X \to \Conj(h)$. This is surjective 
since $g^{-1} S(x_0) g = S(x_0 g)$, and injective since $X$ is faithful.
\end{proof}

Let $Z(h) = \{ g \in G | gh = hg \}$ be the centralizer of $h$ in $G$.
\begin{lemma}
A map 
\[
\begin{matrix}
\Conj(h) & \to & Z(h) \backslash G \\ 
\textrm{\rotatebox{90}{$\in$}} & & \textrm{\rotatebox{90}{$\in$}} \\
g^{-1} h g & \mapsto & Z(h)g
\end{matrix}
\] 
is well-defined and bijective.
\end{lemma}
\begin{proof}
Let $g_1^{-1} h g_1 = g_2^{-1} h g_2$.
Then $(g_1 g_2^{-1})^{-1} h (g_1 g_2^{-1}) = h$, so $g_1 g_2^{-1} \in Z(h)$ and $g_1 \in Z(h) g_2$.
This means that $g_1$ and $g_2$ belong to the same right coset.
Therefore the map is well-defined. By a similar argument, we can show the injectivity.
Surjectivity is trivial by definition.
\end{proof}

From now on we study the quandle structure on $Z(h) \backslash G$ and construct a section of
the projection $\pi: G \to Z(h) \backslash G$.
The quandle operation on $\Conj(h)$ induces a quandle operation on $Z(h) \backslash G$:
\[
\begin{split}
Z(h)g_1 * Z(h)g_2 & \leftrightarrow (g_1^{-1} h g_1) * (g_2^{-1} h g_2) \\
&= (g_2^{-1} h g_2)^{-1}  (g_1^{-1} h g_1)  (g_2^{-1} h g_2)  \\
&= (g_1 g_2^{-1} h g_2)^{-1} h (g_1 g_2^{-1} h g_2)  \\
& \leftrightarrow Z(h) g_1 (g_2^{-1} h g_2). 
\end{split}
\]
This quandle operation on $Z(h) \backslash G$ lifts to a quandle operation $\widetilde{*}$ on $G$ by
\begin{equation}
\label{eq:lift_of_the_operation}
g_1 \widetilde{*} g_2 = h^{-1} g_1 (g_2^{-1} h g_2) \quad (g_1, g_2 \in G).
\end{equation}
We can easily check that $\widetilde{*}$ satisfies the quandle axioms (the inverse operation is given by $g_1 \widetilde{*}^{-1} g_2 = h g_1 g_2^{-1}h^{-1}g_2$)
and the projection map $\pi : G \to Z(h) \backslash G$ is a quandle homomorphism.
We remark that the quandle operation given by (\ref{eq:lift_of_the_operation}) has been already studied by Joyce \cite{joyce} and Eisermann \cite{eisermann}.

Let $s: Z(h) \backslash G \to G$ be a section of $\pi$, i.e. a map (not a homomorphism) satisfying $\pi \circ s = \mathrm{id}$.
Since $s(x*y)$ and $s(x) \widetilde{*} s(y)$ are in the same coset in $Z(h) \backslash G$,
there exists an element $c(x, y) \in Z(h)$ satisfying 
\[
s(x) \widetilde{*} s(y) = c(x,y) s(x*y) .
\]
\begin{lemma}
\label{lem:section}
If $Z(h)$ is an abelian group, 
$c : Z(h) \backslash G \times Z(h) \backslash G  \to Z(h)$ is a quandle 2-cocycle.
If the cocycle $c$ is cohomologous to zero, we can change the section $s$ to satisfy $s(x*y) = s(x) \widetilde{*} s(y)$. 
\end{lemma}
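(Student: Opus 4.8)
The plan is to verify directly that the function $c$ satisfies the cocycle condition of quandle cohomology in degree $2$, using the associativity-like axiom (Q3) of the quandle operation $\widetilde{*}$ on $G$, and then to show that a coboundary can be absorbed by modifying the section. First I would fix notation: write $\bar x = s(x)$ for $x \in Z(h)\backslash G$, so that $\bar x \,\widetilde{*}\, \bar y = c(x,y)\,\overline{x*y}$ with $c(x,y)\in Z(h)$, and observe that since $Z(h)$ is abelian (and $Z(h)$ is exactly the stabilizer of each point under the relevant action, hence acts as a ``coefficient group''), the quantities $c(x,y)$ multiply like abelian group elements and conjugation by elements of $G$ on the $c$-values needs to be tracked only through how $Z(h)$ sits inside $G$; in fact one checks $Z(h)$ is normalized appropriately so that the resulting action on coefficients is the expected quandle-module action. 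The key computation is to expand $(\bar x\,\widetilde{*}\,\bar y)\,\widetilde{*}\,\bar z$ in two ways. On one hand, using the definition of $c$ twice,
\[
(\bar x\,\widetilde{*}\,\bar y)\,\widetilde{*}\,\bar z = \bigl(c(x,y)\,\overline{x*y}\bigr)\,\widetilde{*}\,\bar z = c(x,y)\,\bigl(\overline{x*y}\,\widetilde{*}\,\bar z\bigr) = c(x,y)\,c(x*y,z)\,\overline{(x*y)*z},
\]
where the middle equality uses that left multiplication by $c(x,y)\in Z(h)$ commutes with $\widetilde{*}$ on the left slot, which follows from formula (\ref{eq:lift_of_the_operation}) because $c(x,y)$ commutes with $h$. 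On the other hand, applying axiom (Q3) for $\widetilde{*}$, namely $(\bar x\,\widetilde{*}\,\bar y)\,\widetilde{*}\,\bar z = (\bar x\,\widetilde{*}\,\bar z)\,\widetilde{*}\,(\bar y\,\widetilde{*}\,\bar z)$, and expanding each factor,
\[
(\bar x\,\widetilde{*}\,\bar z)\,\widetilde{*}\,(\bar y\,\widetilde{*}\,\bar z) = \bigl(c(x,z)\,\overline{x*z}\bigr)\,\widetilde{*}\,\bigl(c(y,z)\,\overline{y*z}\bigr) = c(x,z)\,\bigl(\overline{x*z}\,\widetilde{*}\,c(y,z)\,\overline{y*z}\bigr).
\]
Here I would simplify $\overline{x*z}\,\widetilde{*}\,(c(y,z)\,\overline{y*z})$ using (\ref{eq:lift_of_the_operation}): the right-slot conjugate $g_2^{-1}hg_2$ with $g_2 = c(y,z)\,\overline{y*z}$ equals $\overline{y*z}^{-1}\,c(y,z)^{-1}\,h\,c(y,z)\,\overline{y*z} = \overline{y*z}^{-1}h\,\overline{y*z}$ since $c(y,z)\in Z(h)$, so the $c(y,z)$ disappears from the right slot entirely, leaving $\overline{x*z}\,\widetilde{*}\,\overline{y*z} = c(x*z,\,y*z)\,\overline{(x*z)*(y*z)}$. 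Comparing the two expansions and using (Q3) for $*$ on $Z(h)\backslash G$ to match the trailing section terms, I get the identity
\[
c(x,y)\,c(x*y,\,z) = c(x,z)\,c(x*z,\,y*z)
\]
in the abelian group $Z(h)$, which (after rearranging into additive notation) is precisely the quandle $2$-cocycle condition; I would also check the ``diagonal'' normalization $c(x,x)$ using axiom (Q1), $\bar x\,\widetilde{*}\,\bar x$, to confirm $c$ descends to quandle (rather than merely rack) cohomology.

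For the second statement, suppose $c = \partial b$ for some $1$-cochain $b : Z(h)\backslash G \to Z(h)$, i.e. $c(x,y) = b(x)^{-1}\,b(x*y)\,\cdot(\text{action of }y\text{ on }b(x)\text{-type term})$ in multiplicative notation matching the degree-$1$ coboundary of quandle cohomology. Then I would define a new section $s'(x) = b(x)\,s(x)$ (which is still a section since $b(x)\in Z(h) = \ker\pi$ composed appropriately) and recompute: $s'(x)\,\widetilde{*}\,s'(y) = b(x)\,s(x)\,\widetilde{*}\,b(y)\,s(y)$; by the same mechanism as above the left factor $b(x)$ comes out front and the right factor $b(y)$ drops out of the conjugation term, giving $b(x)\,\bigl(s(x)\,\widetilde{*}\,s(y)\bigr) = b(x)\,c(x,y)\,s(x*y)$, and the coboundary relation makes $b(x)\,c(x,y) = b(x*y)$ (times the appropriate action factor, which is absorbed correctly because $b(x*y)$ multiplies $s(x*y)$), so $s'(x)\,\widetilde{*}\,s'(y) = b(x*y)\,s(x*y) = s'(x*y)$, as desired.

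The main obstacle I anticipate is bookkeeping the $G_X$-action (equivalently, the quandle-module action of $Z(h)\backslash G$ on the coefficient group $Z(h)$) correctly: in the definition of $c$, whether $c(x,y)$ should be read on the left or the right, and how conjugation by section representatives twists the $Z(h)$-values, matters for getting the signs/twists in the cocycle identity to match the conventions of Section \ref{sec:quandle_and_homology}. Because $Z(h)$ is abelian this is the only subtlety — all the genuinely noncommutative manipulation is handled by the observation that elements of $Z(h)$ commute with $h$ and therefore pass transparently through the formula (\ref{eq:lift_of_the_operation}) — but it needs to be set up carefully so that ``quandle $2$-cocycle'' is the statement with respect to the action induced from the one already described for $\Conj(h)$, not some untwisted variant.
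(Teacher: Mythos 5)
Your proposal is correct and follows essentially the same route as the paper: the key observation that elements of $Z(h)$ pass through $\widetilde{*}$ (the left factor pulls out front, the right factor vanishes from the conjugation) because they commute with $h$, then axiom (Q3) yields the $2$-cocycle identity, (Q1) gives $c(x,x)=1$, and the section is corrected by the cobounding $1$-cochain. The only difference is cosmetic: the paper uses the untwisted coboundary convention $c(x,y)=b(x)b(x*y)^{-1}$ and sets $s'(x)=b(x)^{-1}s(x)$, so the ``action factor'' you were hedging about does not appear.
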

\begin{proof}
For $c_1,c_2 \in Z(h)$ and $g_1, g_2 \in G$, we have
\[
(c_1 g_1) \widetilde{*} (c_2 g_2) = h^{-1} c_1 g_1 g_2^{-1} c_2^{-1} h c_2 g_2 = c_1 h^{-1} g_1 g_2^{-1} h g_2 = c_1 (g_1 \widetilde{*} g_2).
\]
Therefore 
\begin{equation}
\label{eq:ditribution_1}
\begin{split}
(s(x) \widetilde{*} s(y)) \widetilde{*} s(z) &= (c(x,y) s(x*y) ) \widetilde{*} s(z) \\
&= c(x,y) (s(x*y) \widetilde{*} s(z)) \\
&= c(x,y) c(x*y,z) s((x*y)*z) 
\end{split}
\end{equation}
and
\begin{equation}
\label{eq:ditribution_2}
\begin{split}
(s(x) \widetilde{*} s(z)) \widetilde{*} (s(y) \widetilde{*} s(z)) &= (c(x,z) s(x*z)) \widetilde{*} (c(y,z) s(y*z)) \\
&= c(x,z) c(x*z,y*z) s( (x*z)*(y*z) ) \\
\end{split}
\end{equation}
for any $x,y,z \in Z(h) \backslash G$.
Comparing (\ref{eq:ditribution_1}) and (\ref{eq:ditribution_2}), we have
\[
c(x,z) c(x*y,z)^{-1} c(x,y)^{-1} c(x*z,y*z) = 1.
\]
By $s(x) \widetilde{*} s(x) = s(x)$, we also have $c(x,x) =1$.

If $c$ is cohomologous to zero, then there exists a map $b : Z(h) \backslash G \to Z(h)$ satisfying $c(x,y) = b(x) b(x*y)^{-1}$.
Put $s'(x) = b(x)^{-1} s(x)$, then $s'$ satisfies $s'(x) \widetilde{*} s'(y) = s'(x*y)$.
\end{proof}

\begin{remark}
The $2$-cocycle $c$ has already appeared in \cite{eisermann} in a similar context.
\end{remark}

\begin{example}
Let $G$ be the dihedral group $D_{2p} = \langle h,x | h^2 = x^p = h x h x =1 \rangle$ where $p$ is an odd number greater than $2$. 
Then we have $Z(h) = \{ 1, h\}$ and 
$\Conj(h) = \{ x^{-i} h x^{i} | i = 0, 1, \dots, p-1\} = \{ hx^{2i} | i = 0, \dots,p-1 \}$.
We can identify $x^{-i} h x^i \in \Conj(h)$ with $i \in R_p = \{ 0,1,2, \dots, p-1\}$.
Define a section $s: Z(h) \backslash G \to G$ by 
\[
\begin{matrix}
\Conj(h) & \cong & Z(h) \backslash G & \xrightarrow{s}{} & G\\
\textrm{\rotatebox{90}{$\in$}} & & \textrm{\rotatebox{90}{$\in$}}  &  & \textrm{\rotatebox{90}{$\in$}} \\
x^{-i}h x^i & \leftrightarrow & Z(h)x^i & \mapsto & hx^i\\
\end{matrix}
\]
Then we have
\[
\begin{split}
s( & Z(h)x^{i} * Z(h)x^{j}) = s( Z(h) x^{2j-i}) = h x^{2j-i} \\
& = h^{-1} (h x^{i}) (x^{-j} h x^{j} ) = s(Z(h)x^i) \widetilde{*} s(Z(h)x^j).
\end{split}
\]
Therefore $c(x,y) = 0$ for any $x,y \in R_p$.
\end{example}

\subsection{}
Let $G$ be a group.  Fix $h \in G$ with $h^{l}=1$ ($l > 1$). 
In the following we assume:
\begin{assumption}
\label{asm:vanishing}
$Z(h)$ is abelian and the 2-cocycle corresponding to $G \to Z(h) \backslash G$ is cohomologous to zero. 
\end{assumption}
Under the assumption, we can take a section $s: Z(h) \backslash G \to G$ satisfying $s(x*y) = s(x) \widetilde{*} s(y)$ by Lemma \ref{lem:section}. 
Let $f: G^{k+1} \to A$ be a normalized group $k$-cocycle of $G$ in the homogeneous notation, where $A$ is an abelian group.
Then $f$ satisfies 
\begin{enumerate}
\item $\displaystyle\sum_{i=0}^{k+1} (-1)^i f(x_0, \dots, \widehat{x_i}, \dots ,x_{k+1}) = 0$, 
\item $f(gx_0, \dots, gx_k) =  f(x_0, \dots, x_k)$ for any $g \in G$ (left invariance), 
\item $f(x_0, \dots , x_{k} ) = 0$ if $x_i=x_{i+1}$ for some $i$.
\end{enumerate}
Since it is convenient to use a right invariant function in the following construction, we modify the condition (2) by  
\begin{enumerate}
\item[(2')] $f(x_0 g, \dots, x_k g) =  f(x_0, \dots, x_k)$ for any $g \in G$ (right invariance), 
\end{enumerate}
by replacing $f(x_0,\dots,x_k)$ by $ f(x_0^{-1}, \dots, x_k^{-1})$.
Define  $\tilde{f} : \Conj(h)^{k+1} \to A$ by
\[
\tilde{f}(x_0. \dots, x_k) = \sum_{i=0}^{l-1} f(h^i s(x_0), \dots, h^i s(x_k)) 
\]
for $x_0, \dots , x_k \in \Conj(h)$.

\begin{proposition}
The function $\tilde{f}$ satisfies the conditions (1), (2) and (3) of Section \ref{sec:main_map}. 
Therefore $\tilde{f}$ gives rise to a $k$-cocycle of $H_{\Delta}^k( \Conj(h) ;A)$. 
\end{proposition}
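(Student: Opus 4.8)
The plan is to verify directly the three conditions of Section~\ref{sec:main_map} for $\tilde f$; the resulting statement about $H_\Delta^k(\Conj(h);A)$ is then immediate from that section. Conditions (1) and (3) will follow term by term from the corresponding properties of $f$, and the only real content is condition (2), which is where the hypothesis $h^l=1$ and the normalization of the section $s$ are used.

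First I would dispose of (1) and (3). For (1), fix a $(k+2)$-tuple $x_0,\dots,x_{k+1}\in\Conj(h)$ and apply the group cocycle identity for $f$ to the tuple $(h^i s(x_0),\dots,h^i s(x_{k+1}))$ for each $i=0,\dots,l-1$; summing these $l$ vanishing alternating sums and interchanging the order of summation gives exactly $\sum_{i=0}^{k+1}(-1)^i\tilde f(x_0,\dots,\widehat{x_i},\dots,x_{k+1})=0$. For (3), if $x_j=x_{j+1}$ then $s(x_j)=s(x_{j+1})$, hence $h^i s(x_j)=h^i s(x_{j+1})$ for every $i$, and each summand $f(h^i s(x_0),\dots,h^i s(x_k))$ vanishes by normalization of $f$; summing over $i$ gives $\tilde f(x_0,\dots,x_k)=0$.

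The heart of the matter is the $*$-invariance (2). Here I would invoke the key property of the section guaranteed by Lemma~\ref{lem:section} under Assumption~\ref{asm:vanishing}, namely $s(x*y)=s(x)\,\widetilde{*}\,s(y)$, together with the explicit formula (\ref{eq:lift_of_the_operation}) for $\widetilde{*}$. Writing $g_j=s(x_j)$ and $g=s(y)$, this gives $s(x_j*y)=h^{-1}g_j\,(g^{-1}hg)$, so that $h^i s(x_j*y)=h^{i-1}g_j\,(g^{-1}hg)$ for every $j$ with the \emph{same} right-hand factor $g^{-1}hg$ (depending only on $y$). The right invariance (2$'$) of $f$ then lets me delete this common factor from all $k+1$ entries at once, yielding
\[
\tilde f(x_0*y,\dots,x_k*y)=\sum_{i=0}^{l-1} f(h^{i-1}g_0,\dots,h^{i-1}g_k).
\]
Reindexing $i\mapsto i+1$ and using $h^l=1$, so that the $i=-1$ term coincides with the $i=l-1$ term, converts the right-hand side into $\sum_{i=0}^{l-1} f(h^i g_0,\dots,h^i g_k)=\tilde f(x_0,\dots,x_k)$, which is condition (2).

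The step I expect to be the main (if modest) obstacle is this last one: it is precisely the averaging over the cyclic group $\langle h\rangle$ that repairs the failure of each individual term $f(h^i s(-),\dots,h^i s(-))$ to be $*$-invariant, and the cyclic reindexing is legitimate only because $h^l=1$. One should also make sure that the single application of right invariance is valid, i.e.\ that the conjugation factor $g^{-1}hg$ produced by $\widetilde{*}$ is common to all entries; this is exactly the point where having chosen $s$ with $c\equiv 0$ (so that $s$ is a genuine quandle homomorphism into $(G,\widetilde{*})$) is needed. Everything else is the routine bookkeeping sketched above.
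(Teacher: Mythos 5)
Your proposal is correct and follows essentially the same route as the paper: conditions (1) and (3) are immediate from the normalized group-cocycle axioms, and condition (2) is verified exactly as you describe, using $s(x*y)=s(x)\,\widetilde{*}\,s(y)$ to produce the common right factor $s(y)^{-1}hs(y)$, deleting it by right invariance, and cyclically reindexing the sum via $h^l=1$. Nothing further is needed.
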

\begin{proof}
It is clear that (1) and (3) are satisfied from the conditions on a normalized group cocycle in the homogeneous notation.
We only have to check the second property.
\[
\begin{split}
\tilde{f}&(x_0*y, \dots, x_k*y) \\
&= \sum_{i=0}^{l-1} f(h^i s(x_0*y), \dots, h^i s(x_k*y)) \\
&= \sum_{i=0}^{l-1} f(h^i s(x_0) \widetilde{*} s(y), \dots, h^i s(x_k) \widetilde{*} s(y)) \\
&= \sum_{i=0}^{l-1} f(h^{i-1} s(x_0) (s(y)^{-1} h s(y)), \dots, h^{i-1} s(x_k) (s(y)^{-1} h s(y))) \\
&= \sum_{i=0}^{l-1} f(h^{i-1} s(x_0), \dots, h^{i-1} s(x_k)) \quad \textrm{(right invariance)}\\
&= \tilde{f}(x_0, \dots, x_k).
\end{split}
\]
\end{proof}

Combining with the arguments of Section \ref{sec:main_map}, we have
\begin{corollary}
If $Z(h)$ is abelian and the 2-cocycle corresponding to $G \to Z(h) \backslash G$ is cohomologous to zero, 
then there is a homomorphism
\[
H^n(G; A) \to H_Q^n(\Conj(h); A)
\]
for any abelian group $A$.
\end{corollary}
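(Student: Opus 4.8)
The plan is to realize the homomorphism as a composition of constructions that are already in place. The preceding Proposition sends a normalized, right-invariant group $n$-cocycle of $G$ to a cocycle of $H_{\Delta}^{n}(\Conj(h);A)$, and the apparatus of Section \ref{sec:main_map}---the chain map $\varphi$ together with the reinterpretation of a $\Delta$-cocycle as a quandle cocycle---sends a $\Delta$-cocycle of $\Conj(h)$ to a quandle $n$-cocycle of $\Conj(h)$ with coefficients in $A$. So, given a class in $H^{n}(G;A)$, I would first choose a normalized cocycle representative $f$ (possible by \S\ref{subsec:group_homology}), replace it by the cohomologous right-invariant cocycle $(x_{0},\dots,x_{n})\mapsto f(x_{0}^{-1},\dots,x_{n}^{-1})$, form $\tilde f$ as in the Proposition, apply $\varphi^{*}$, and read the outcome as a quandle $n$-cochain via $(\varphi^{*}\tilde f)(r,x_{1},\dots,x_{n-1})=(\varphi^{*}\tilde f)(x_{1},\dots,x_{n-1})(r)$. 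By the Proposition, $\tilde f$ satisfies conditions (1)--(3) of Section \ref{sec:main_map}, so by the Theorem there $\varphi^{*}\tilde f$ is a quandle $n$-cocycle.

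The remaining task is to check that this recipe descends to cohomology and is additive, so that it defines a group homomorphism $H^{n}(G;A)\to H_{Q}^{n}(\Conj(h);A)$. I would argue that every step is induced by an additive operation commuting with the differentials, hence carries coboundaries to coboundaries. Coordinate-wise inversion on $C'_{*}(G)$ is a simplicial automorphism intertwining the left and right $G$-actions, so it is a cochain isomorphism from the left-invariant to the right-invariant cochain complex. The passage $f\mapsto\tilde f$ factors as averaging over the left translation action of $\langle h\rangle$ on $C'_{*}(G)$---which commutes with $\partial$ since $\partial$ is natural---followed by restriction of cochains along the coordinate-wise section $s\colon Z(h)\backslash G\to G$, and restriction along a map of sets is a cochain map on the corresponding $\mathrm{Hom}$-complexes; the computation in the proof of the Proposition shows the result lies in the $G_{\Conj(h)}$-invariant subcomplex that computes $H_{\Delta}^{*}(\Conj(h);A)$. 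Finally $\varphi^{*}$ is a cochain map by the Theorem of Section \ref{sec:main_map} (and, as recorded there, $\varphi_{*}$, hence $\varphi^{*}$, does not depend on the auxiliary point $q\in\Conj(h)$), and the final reinterpretation as a quandle $n$-cochain is linear and compatible with the coboundary operators---a routine check at the level of Section \ref{sec:main_map}. Since each step is additive, the composite is a group homomorphism, which is the content of the Corollary.

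The one point that genuinely requires care---and that I would spell out in full---is that $s$ is merely a section of $\pi\colon G\to Z(h)\backslash G$, not a quandle homomorphism, so restricting a group coboundary $\partial g$ along $s$ produces a $\Delta$-cochain of $G$ that is a priori not $G_{\Conj(h)}$-invariant. This is precisely the defect repaired by the averaging $\sum_{i=0}^{l-1}$ over the powers of $h$: using $h^{l}=1$ together with a section $s$ satisfying $s(x*y)=s(x)\widetilde{*}s(y)$ (available under Assumption \ref{asm:vanishing} by Lemma \ref{lem:section}), the verification is the same computation carried out in the proof of the Proposition, now applied to $\partial g$ in place of $f$. Once this is granted, nothing further is needed, since the Corollary is simply this composition.
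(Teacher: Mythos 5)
Your proposal is correct and follows essentially the same route as the paper, which derives the corollary immediately by combining the preceding Proposition (producing the $\Delta$-cocycle $\tilde f$ from a normalized right-invariant group cocycle) with the map $\varphi^{*}$ of Section \ref{sec:main_map}. The extra verification you supply---that $f\mapsto\tilde f$ is a cochain map into the invariant $\Delta$-complex, so the construction descends to cohomology and is additive---is a correct elaboration of what the paper leaves implicit.
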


\subsection{}
We return to the case of $R_p$ discussed in the previous section.
We assume that $p$ is an odd integer greater than $2$.
Let $G$ be the dihedral group $D_{2p} = \langle h,x | h^2 = x^p = h x h x =1 \rangle$.
Consider the short exact sequence 
\begin{equation}
\label{eq:short_exact}
0 \to \mathbb{Z}/p \to D_{2p} \to \mathbb{Z}/2 \to 0.
\end{equation}
We regard $\mathbb{Z}/2$ as $\{1,h\}$ by taking coset representatives in $D_{2p}$.
Then $\mathbb{Z}/2$ acts on $\mathbb{Z}/p$ by $h (x^i) = x^{-i}$.
This induces the restriction map 
\[
H^*(D_{2p}; \mathbb{Z}/p) \to H^*(\mathbb{Z}/p; \mathbb{Z}/p)^{\mathbb{Z}/2}.
\]
We can show that this homomorphism is an isomorphism (Proposition III.10.4 of \cite{brown}).
To obtain a group cocycle of $D_{2p}$ from a group cocycle of $\mathbb{Z}/p$, we need the inverse map, which is called the \emph{transfer}.
The transfer map is described as follows (see also \cite{brown}).
Let $r$ be the map $D_{2p} \to \mathbb{Z}/p$ defined by $r(x^i)=x^i$ and $r(hx^i)=x^{-i}$.
For a cocycle $f$ of $H^n(\mathbb{Z}/p; \mathbb{Z}/p)^{\mathbb{Z}/2}$, the image $f'$ of the transfer is given by 
\[
f'(x_0,\dots, x_n) = f(r x_0,\dots, r x_n)+f(h r x_0, \dots, h r x_n)
\]
in the homogeneous notation.
When restricted to the image of $s: Z(h) \backslash G \to G$, this is equal to the map defined in (\ref{eq:average_of_cocycle}).
Applying our construction for this group cocycle, we obtain a quandle 3-cocycle of $R_p$, which is twice the cocycle constructed in the previous section.

\subsection{}
\label{subsec:quandle_to_group}
We end this section by giving another homomorphism from $H^n(G; A)$ to $H_Q^n(\Conj(h); A)$ arising from more general context.

Let $X$ be a quandle and $M$ be a right $\mathbb{Z}[G_X]$-module.
We can construct a map from the rack homology $H^R_n(X;M)$ to the group homology $H_n(G_X; M)$.
The following lemma is well-known, e.g. see Lemma 7.4 of Chapter I of \cite{brown}.
\begin{lemma}
Let $ \dots \to P_1 \to P_0 \to M \to 0 $ be a chain complex where $P_i$ are projective (e.g. free).
Let $ \dots \to C_1 \to C_0 \to N \to 0 $ be an acyclic complex.
Any homomorphism $M \to N$ can be extended to a chain map from $\{ P_*\}$ to $\{C_*\}$.
Moreover such a chain map is unique up to chain homotopy.
\end{lemma}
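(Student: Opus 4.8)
The plan is to prove this by the standard comparison (``fundamental lemma of homological algebra'') argument: build the chain map $\{f_n\}$ one degree at a time, using projectivity of the $P_n$ together with acyclicity (exactness) of $\{C_*\}$ at each stage, and then deduce uniqueness by applying the very same inductive scheme to the difference of two lifts.

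First I would fix notation. Write $\epsilon : P_0 \to M$ and $\eta : C_0 \to N$ for the augmentation maps, write $\partial$ for the boundary maps of both complexes, and let $\phi : M \to N$ be the given homomorphism. A chain map \emph{extending} $\phi$ means a family $f_n : P_n \to C_n$ with $\partial f_n = f_{n-1}\partial$ for $n \geq 1$ and $\eta f_0 = \phi\,\epsilon$. For the base case, $\phi\,\epsilon : P_0 \to N$ lifts through the surjection $\eta : C_0 \to N$ (surjective since $\{C_*\}$ is acyclic) by projectivity of $P_0$, producing $f_0$. For the inductive step, assume $f_0,\dots,f_{n-1}$ are constructed. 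Then $f_{n-1}\partial : P_n \to C_{n-1}$ satisfies $\partial(f_{n-1}\partial) = f_{n-2}\,\partial\partial = 0$ (with $\eta$ in place of $\partial$ and $\phi\epsilon$ interpretation when $n=1$), so its image lies in $\ker(\partial : C_{n-1}\to C_{n-2}) = \operatorname{im}(\partial : C_n \to C_{n-1})$ by acyclicity of $\{C_*\}$. Projectivity of $P_n$ applied to the surjection $C_n \twoheadrightarrow \operatorname{im}(\partial : C_n \to C_{n-1})$ then lifts $f_{n-1}\partial$ to $f_n : P_n \to C_n$ with $\partial f_n = f_{n-1}\partial$.

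For uniqueness up to chain homotopy, given two extensions $\{f_n\}$ and $\{g_n\}$, set $h_n = f_n - g_n$, a chain map extending the zero map $0 : M \to N$. I would construct a chain homotopy $s_n : P_n \to C_{n+1}$ with $h_n = \partial s_n + s_{n-1}\partial$ (and $s_{-1} = 0$) by the same induction: since $\eta h_0 = 0$ we get $\operatorname{im}(h_0) \subseteq \ker\eta = \operatorname{im}(\partial : C_1 \to C_0)$, so projectivity of $P_0$ yields $s_0$ with $\partial s_0 = h_0$; and assuming $s_0,\dots,s_{n-1}$ are built, a one-line computation using $\partial h_n = h_{n-1}\partial$ together with $\partial s_{n-1} = h_{n-1} - s_{n-2}\partial$ shows $\partial(h_n - s_{n-1}\partial) = 0$, so $h_n - s_{n-1}\partial$ factors through $\operatorname{im}(\partial : C_{n+1}\to C_n)$ and projectivity of $P_n$ furnishes $s_n$ with $h_n = \partial s_n + s_{n-1}\partial$.

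There is no genuine obstacle here; the entire content is the two bookkeeping inductions. The only point requiring care is the handling of the augmentation ``degree $-1$'': one must phrase the commutation condition so that the degree $-1$ map is $\phi : M \to N$, and use $\ker\eta = \operatorname{im}(\partial : C_1 \to C_0)$ as the base instance of the identity $\ker(\partial : C_{n-1}\to C_{n-2}) = \operatorname{im}(\partial : C_n \to C_{n-1})$ that acyclicity of $\{C_*\}$ provides. Everything else is a direct application of the definition of a projective module to those surjections.
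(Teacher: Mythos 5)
Your proof is correct and is precisely the standard comparison-theorem argument; the paper itself gives no proof, deferring to Lemma 7.4 of Chapter I of Brown's \emph{Cohomology of Groups}, whose proof is the same two inductions you carry out (lift through the surjections supplied by acyclicity using projectivity of each $P_n$, then repeat for the difference of two lifts to build the homotopy). Your handling of the degree $-1$ augmentation step is the only delicate point and you treat it correctly.
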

So there exists a unique chain map from $C^R_*(X)$ to $C_*(G_X)$ up to homotopy.
This map induces $M \otimes_{\mathbb{Z}[G_X]} C^R_*(X) \to M \otimes_{\mathbb{Z}[G_X]} C_*(G_X)$ and then
$H^R_n(X;M) \to H_n(G_X; M)$.
Using normalized chains in group homology, we can also construct a map $H^Q_n(X;M) \to H_n(G_X; M)$.
We give an explicit chain map. 
Let $(x_1, \dots , x_n)$ be a generator of $C^R_n(X)$.
We define a map $\psi$ by
\[
\psi((x_1, \dots , x_n))= \sum_{\sigma \in \mathfrak{S}_n} \mathrm{sgn}(\sigma) [y_{\sigma,1}|\cdots|y_{\sigma,i}|\cdots|y_{\sigma,n}]
\]
where $y_{\sigma,i} \in X$ is defined for a permutation $\sigma$ and $i \in \{1, \dots, n\}$ as follows.
Let $j_1, \dots, j_{i}<i$ be the maximal set of numbers satisfying $\sigma(i) < \sigma(j_1) <  \sigma(j_2) < \dots < \sigma(j_{i}) $.
Then define
\[
y_{\sigma,i}= x_{\sigma(i)}*(x_{\sigma(j_{1})} x_{\sigma(j_{2})} \cdots x_{\sigma(j_{i})}).
\]
A graphical picture of this map is given in Figure \ref{fig:quandle_cycle_to_group_cycle}.
For example, when $n=3$,
\[
\begin{split}
\psi((x,y,z)) = &[x|y|z] - [x|z|y*z]+[y|z|(x*y)*z]-[y|x*y|z] \\
&+[z|x*z|y*z]-[z|y*z|(x*y)*z],
\end{split}
\]
for $(x,y,z) \in C^R_3(X)$.
Dually, we also have a map $H^n(G_X;M) \to H_Q^n(X;M)$.

\begin{figure}
\input{quandle_group.pstex_t}
\caption{} 
\label{fig:quandle_cycle_to_group_cycle}
\end{figure}

We apply this map for $\Conj(h)$. 
Since there exists a natural homomorphism from the associated group $G_{\Conj(h)}$ to $G$, we have a homomorphism
\[
H^n(G;A) \to H^n(G_{\Conj(h)};A) \to H_Q^n(\Conj(h);A).
\]

Fenn, Rourke and Sanderson defined the \emph{rack space} $BX$ in \cite{FRS}. 
Since $\pi_1(BX)$ is isomorphic to $G_X$, there exists a unique map, up to homotopy, from $BX$ to the Eilenberg-MacLane space $K(G_X,1)$ 
which induces the isomorphism between their fundamental groups.
This map induces a homomorphism $H^n(G_X;M) \to H_Q^n(X;M)$, which is equal to the map we have constructed in this subsection.
Clauwens showed in Proposition 25 of \cite{clauwens} that this map vanishes under some conditions on $X$ and $M$. 
In particular, when $p$ is odd prime, $H^n(D_{2p}; \mathbb{Z}/p) \to H_Q^n(R_p; \mathbb{Z}/p)$ vanishes for $n > 0$.

\section{Quandle cycle and branched cover}
\label{sec:quandle_cyc_as_cyc_bra_cov}
In this section, we study the dual of the previous construction.
We will show that the cycle $C(\mathcal{S})$ associated with a shadow coloring $\mathcal{S}=(\mathcal{A}, \mathcal{R})$ of a knot $K$ 
gives rise to a group cycle represented by a cyclic branched covering along $K$ and the representation induced from the arc coloring $\mathcal{A}$.

\subsection{}
Let $X$ be a quandle.
Let $D$ be a diagram of a knot $K$.
For a shadow coloring $\mathcal{S} = (\mathcal{A}, \mathcal{R})$ of $D$ whose arcs and regions are colored by $X$, define $\mathcal{A}*a$ and $\mathcal{R}*a$ 
for $a \in X$ by
\[
(\mathcal{A}*a )(x) = \mathcal{A}(x)*a, \quad (\mathcal{R}*a )(r) = \mathcal{R}(r)*a \quad (\textrm{for any arc $x$ and region $r$}). 
\] 
By the axiom (Q3), $\mathcal{S}*a = (\mathcal{A}*a, \mathcal{R}*a)$ is also a shadow coloring.

In the following, we assume that $X = \Conj(h)$ for some group $G$ and $h \in G$ and satisfies Assumption \ref{asm:vanishing}.
As in the previous section, let $s : \Conj(h) \cong Z(h) \backslash G  \to G$ be a section satisfying $s(a*b) = s(a) \widetilde{*} s(b)$ for $a,b \in X$. 
Let 
\[
\iota : C^{\Delta}_n(X) \to C_n(G; \mathbb{Z}): (a_0, \dots, a_n) \mapsto (s(a_0), \dots, s(a_n)).
\]
Composing with $\varphi : C^{Q}_{n}(X; \mathbb{Z}[X]) \longrightarrow C^{\Delta}_{n+1}(X)$, we have a map
\[
C^{Q}_{2}(X; \mathbb{Z}[X]) \xrightarrow{\varphi}{} C^{\Delta}_{3}(X) \xrightarrow{\iota}{} C_{3}(G; \mathbb{Z}). 
\]
For a shadow coloring $\mathcal{S}$, $ \iota \varphi (C(\mathcal{S}))$ is not a cycle of $C_3(G;\mathbb{Z})$ in general. 
But we can show 
\begin{theorem}
\label{thm:main_branched}
Let $\mathcal{S} = (\mathcal{A},\mathcal{R})$ be a shadow coloring of a diagram $D$ of a knot $K$ by $\Conj(h)$. 
Let $a \in \Conj(h)$ be the color of an arc of $D$.  
If $h^l = 1$, 
\begin{equation}
\label{eq:cycle_represented_by_cyclic_branched_cover}
\iota \varphi(C(\mathcal{S})) + \iota \varphi(C(\mathcal{S}*a)) +\iota \varphi(C(\mathcal{S}*a^2)) 
+ \dots + \iota \varphi(C(\mathcal{S}*a^{l-1})) \in C_3(G;\mathbb{Z})
\end{equation}
is a group cycle represented by the $l$-fold cyclic branched covering $\widehat{C}_l$ along the knot $K$ and 
the representation $\pi_1(\widehat{C}_l) \to G$ induced from the arc coloring $\mathcal{A}$.
\end{theorem}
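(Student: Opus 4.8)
The plan is to identify $\sum_{k=0}^{l-1}\iota\varphi(C(\mathcal{S}*a^{k}))$, up to simplices that are degenerate or cancel in pairs, with the explicit $G$-labeled ordered $3$-cycle $\widehat{T}$ representing $(\widehat{C}_l,\widehat{\rho})$ constructed in \S\ref{sec:cyc_rep_by_cyc_bra_cov}, and then quote that section. Two facts are needed at the outset. First, by \cite{inoue-kabaya} together with the formula (\ref{eq:main_map}), for the cycle $C(\mathcal{S})=\sum_{c}\epsilon_c\,r_c\otimes(x_c,y_c)$ of a shadow-colored diagram, the chain $\varphi(C(\mathcal{S}))$ \emph{is} Weeks' triangulation $T$ of $S^{3}-N(K)$ as a simplicial chain labeled by $X$: the four simplices produced by a crossing $c$ are the four tetrahedra of $T$ at $c$, with vertex ordering $0\leftrightarrow q$, $1\leftrightarrow$ region vertex (labeled by the region color), $2\leftrightarrow$ under-arc vertex, $3\leftrightarrow$ over-arc vertex, and the face identifications of $T$ --- the internal ones within a crossing and the ones labeled $F_i^{\pm}$ between crossings --- are precisely the cancellations in $\partial\varphi(C(\mathcal{S}))=0$. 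Second, since $s(u*v)=s(u)\,\widetilde{*}\,s(v)=h^{-1}s(u)\bigl(s(v)^{-1}h\,s(v)\bigr)$ and $s(c)^{-1}h\,s(c)=c$ for $c\in\Conj(h)$, one obtains $s(u*g)=h^{-e(g)}\,s(u)\,\overline{g}$ for every $g\in G_X$, where $e(g)$ is the exponent sum of $g$ and $\overline{g}\in G$ is its image under the natural map $G_X\to G$; composing with the arc coloring, this natural map realizes the representation $\rho$ induced by $\mathcal{A}$.

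Now fix $k$. Because $C(\mathcal{S}*a^{k})$ arises from $C(\mathcal{S})$ by replacing each $r_c,x_c,y_c$ by $r_c*a^{k},x_c*a^{k},y_c*a^{k}$, the formula (\ref{eq:main_map}) shows $\varphi(C(\mathcal{S}*a^{k}))$ consists of the same $4n$ simplices with every entry but the first replaced by $(\cdot)*a^{k}$; applying $\iota$ and using $s((\cdot)*a^{k})=h^{-k}s(\cdot)\,a^{k}$, then normalizing by the (homologically trivial) left multiplication, one finds that $\iota\varphi(C(\mathcal{S}*a^{k}))$ is the signed sum of the $4n$ tetrahedra of the $k$-th copy $T_k$ of $T$ inside $\widehat{T}$, carrying the vertex labeling induced on the $k$-th sheet by the covering representation $\widehat{\rho}$ (recall $\widehat{\rho}(x_{i,k})=\rho(x_1)^{k-1}\rho(x_i)\rho(x_1)^{-k}$), plus some simplices with a repeated vertex. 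The faces internal to $T_k$ cancel inside this summand, while the faces $F_{i,k}^{\pm}$ survive in $\partial\iota\varphi(C(\mathcal{S}*a^{k}))$, differing from a genuine cancellation by a right translation by a meridian holonomy and a power of $h$ --- exactly the failure of $s$ to be a homomorphism. Summing over $k=0,\dots,l-1$, the surviving face $F_{i,k}^{-}$ of one summand is matched to the surviving face $F_{i,k-1}^{+}$ of the previous one along the identifications $F_{i,s}^{-}\to F_{i,s-1}^{+}$ defining $\widehat{T}$, with $G$-labelings agreeing because $\widehat{\rho}(x_{i,k})$ is precisely the transporting element; and since $\rho(x_1)^{l}=1$ (all colors lie in $\Conj(h)$ and $h^{l}=1$) the labeling also closes up around the single $0$-simplex coming from $K$ after Neumann's blow-up of that vertex, whose inserted suspension over a $2l$-gon consists of tetrahedra cancelling in pairs and so is zero as a chain.

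Consequently $\sum_{k}\iota\varphi(C(\mathcal{S}*a^{k}))$ coincides, up to $3$-chains that cancel in pairs or are degenerate cycles, with the chain representing the ordered $3$-cycle $\widehat{T}$ together with the $G$-valued $1$-cocycle $\lambda$ induced by $\widehat{\rho}$; such chains are trivial in $H_{3}(G;\mathbb{Z})$ and do not affect the cycle condition, so the sum is a cycle of $C_{3}(G;\mathbb{Z})$ representing the same class as $(\widehat{T},\lambda)$, which by \S\ref{sec:cyc_rep_by_cyc_bra_cov} is the class represented by $\widehat{C}_l$ and the representation $\pi_1(\widehat{C}_l)\to G$ induced from $\mathcal{A}$, proving the theorem. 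The main obstacle is the combinatorial bookkeeping concentrated in the middle step: making explicit the dictionary between the $4n$ simplices of $\varphi$ and Weeks' $4n$ tetrahedra (in particular identifying which $2$-faces are the $F_i^{\pm}$), reconciling the crossing signs $\epsilon_c$ with the orientations from the vertex orderings, and verifying that after $\iota$ the surviving boundary $2$-faces of the summed chain are exactly the face-pairings of $\widehat{T}$ with matching labels, so that Neumann's blow-up at the $K$-vertex indeed turns the sum into a genuine cycle without altering its class.
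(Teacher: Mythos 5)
Your proposal is correct and follows essentially the same route as the paper: both identify $\iota\varphi(C(\mathcal{S}*a^{k}))$ with the labeled $k$-th copy $T_k$ of the triangulated ball inside $\widehat{T}$, use the relation $s(b*a_i)=h^{-1}s(b)\rho(x_i)$ (your $s(u*g)=h^{-e(g)}s(u)\overline{g}$) to show that the labelings of the faces $F_{i,s}^{\pm}$ on consecutive copies differ by right multiplication by $\widehat{\rho}(x_{i,s})^{-1}$, and then invoke the construction of Section \ref{sec:cyc_rep_by_cyc_bra_cov} including the blow-up at the vertex coming from $K$. The only cosmetic difference is that the paper indexes the summands as $\mathcal{S}*a_1^{-s+1}$ rather than $\mathcal{S}*a^{k}$, which yields the same collection of shadow colorings since $a^{l}$ acts trivially on $\Conj(h)$ when $h^{l}=1$.
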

\begin{proof}
Let $x_1, \dots, x_n$ be the arcs of the diagram $D$ such that $a = \mathcal{A}(x_1)$.
We denote the color $\mathcal{A}(x_i)$ by $a_i$ ($a_1 = a$).
The arc coloring $\mathcal{A}$ induces a representation $\rho : \pi_1(S^3 - K ) \to G$. 
Using $s : Z(h) \backslash G \to G$,  $\rho$ is given by
\[
\rho(x_i) = s(a_i)^{-1} h s(a_i) \in \Conj(h) \subset G.
\]
We have 
\[
\begin{split}
s(b* a_i) &= h^{-1} s(b) s(a_i)^{-1} h s(a_i) = h^{-1} s(b) \rho(x_i) \in G, \\
s(b *^{-1} a_i) &= h s(b) s(a_i)^{-1} h^{-1} s(a_i) = h s(b) \rho(x_i)^{-1} \in G,
\end{split}
\]
for any $b \in \Conj(h)$. So we have
\[
\begin{split}
s( (b * a_i ) *  a_1^{-s} ) &=  h^{s-1} s(b) \rho(x_i) \rho(x_1)^{-s}, \\
s(b * a_1^{-s+1}) & =  h^{s-1} s(b) \rho(x_1)^{-s+1}. \\
\end{split}
\]
Since 
\[
\rho(x_{i,s}) = \rho(x_1^{s-1} x_i x_1^{-s}) = \rho(x_1)^{s-1} \rho(x_i) \rho(x_1)^{-s}
\]
by (\ref{eq:covering_representation}), we have 
\begin{equation}
\label{eq:gluing_relation}
s(b * a_1^{-s+1}) = s( (b * a_i ) *  a_1^{-s} ) \rho(x_{i,s})^{-1}.
\end{equation}

\begin{figure}
\input{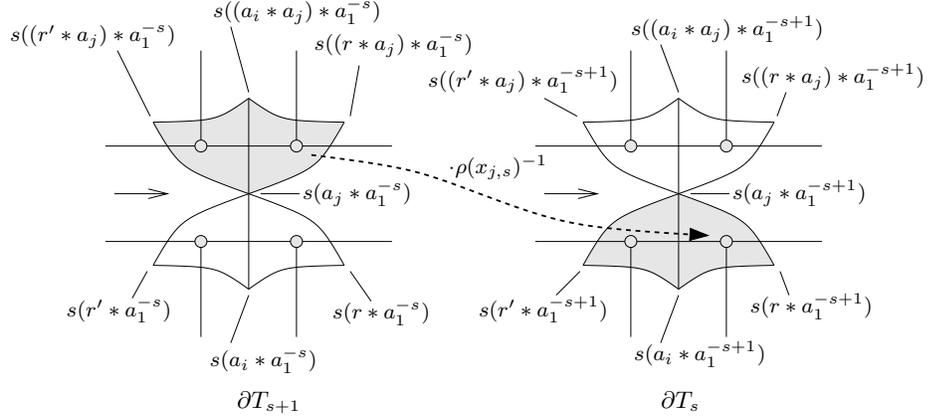}
\caption{A labeling at a crossing. ($r' = r*^{\pm 1} a_i$.)}
\label{fig:labeling_of_crossing}
\end{figure}

Let $T_s (s= 0,1, \dots l-1)$ be copies of the triangulation of a $3$-ball with a vertex ordering on each tetrahedra 
constructed in Section \ref{sec:cyc_rep_by_cyc_bra_cov} (Figure \ref{fig:cut_heegaard} and \ref{fig:ordering}).
Then we define a labeling of vertices of $T_s$ by $\iota \varphi (C(\mathcal{S}*a_1^{-s+1}))$ (see Figure \ref{fig:labeling_of_crossing}).
The vertices of the face ${F'}^{-}_{j,s+1} \subset T_{s+1}$ is labeled by 
\[
(s((r*a_j) *a_1^{-s}), \quad s( (a_i*a_j)*a_1^{-s}), \quad s(a_j*a_1^{-s}) )
\]
and the vertices of the face ${F'}^{+}_{j,s} \subset T_{s}$ is labeled by 
\[
(s( r*a_1^{-s+1}), \quad s(a_i*a_1^{-s+1}), \quad s(a_j*a_1^{-s+1}) ),
\]
where $r$ is the color of the region as indicated in Figure \ref{fig:labeling_of_crossing}.
By (\ref{eq:gluing_relation}) the labelings of the face ${F'}^{-}_{j,s+1}$ and ${F'}^{+}_{j,s}$ are related by  
\[
\begin{split}
(s( r*a_1^{-s+1}), & \quad  s(a_i*a_1^{-s+1}), \quad s(a_j*a_1^{-s+1}) )   \\
&= (s((r*a_j) *a_1^{-s}), \quad s( (a_i*a_j)*a_1^{-s}), \quad s(a_j*a_1^{-s}) ) \rho(x_{j,s})^{-1}.
\end{split}
\]
Therefore this gives a $G$-valued $1$-cocycle on $\widehat{T}$ as constructed in Section \ref{sec:cyc_rep_by_cyc_bra_cov}.
Thus the chain given by (\ref{eq:cycle_represented_by_cyclic_branched_cover}) is a cycle represented by the cyclic branched cover $\widehat{C}_l$ and 
the representation induced from the arc coloring $\mathcal{A}$.
\end{proof}

\subsection{}
We end this section by comparing the shadow cocycle invariant of the $(2,p)$-torus knot ($p$: odd) for the $3$-cocycle obtained in Section \ref{sec:dihedral} 
and the Dijkgraaf-Witten invariant of the lens space $L(p,1)$.

Let $G$ be a finite group and $\alpha$ be a cocycle of $H^3(G;A)$ where $A$ is an abelian group.
Dijkgraaf and Witten defined an invariant of closed oriented 3-manifolds for each $\alpha$.
For an oriented closed manifold $M$, it is defined by
\[
\sum_{\rho: \pi_1(M) \to G} \langle \rho^* \alpha,  [M] \rangle \in \mathbb{Z}[A]
\]
where $\rho^* \alpha$ is the pull-back of $\alpha$ by the classifying map  $M \to BG$ corresponding to $\rho$.
Since $L(p,1)$ is a double branched cover along the $(2,p)$-torus knot, 
the cycle obtained from a shadow coloring by $R_p$ gives rise to a group $3$-cycle of $D_{2p}$ represented by $L(p,1)$ by Theorem \ref{thm:main_branched}. 
Since every representation of $\pi_1(L(p,1)) \cong \mathbb{Z}/p$ into $D_{2p}$ reduces to a representation into $\mathbb{Z}/p$, 
it is natural to ask whether the shadow cocycle invariant for our quandle $3$-cocycle coincides with the Dijkgraaf-Witten invariant 
for $b_1b_2 \in H^3(\mathbb{Z}/p;\mathbb{Z}/p)$.
As we remarked in the introduction, these invariants coincide up to some constant by the result of Hatakenaka and Nosaka \cite{hatakenaka-nosaka} when $p$ is prime.


Let $\mathcal{S}$ be the shadow coloring indicated in Figure \ref{fig:(2,p)-torus}.
Since the homology class $[C(\mathcal{S})]$ does not change under the action of $R_p$ on the shadow coloring $\mathcal{S}$ (see Lemma 4.5 of \cite{inoue-kabaya}), 
we assume that $x=0$.
As computed in \S \ref{subsec:3_cocycle_of_dihedral}, the evaluation of the $C(\mathcal{S})$ at the $3$-cocycle derived from $b_1b_2$ 
is equal to $- 4 y^2  \mod p$.
Therefore the shadow cocycle invariant is 
\[
p \sum_{y=0}^{p-1} t^{-4y^2} = p \sum_{y=0}^{p-1} t^{-y^2}  \in \mathbb{Z}[t]/ (t^p-1) \cong \mathbb{Z}[\mathbb{Z}/p].
\]

We compute the Dijkgraaf-Witten invariant of the lens space $L(p,q)$ for the group $3$-cocycle $b_1b_2 \in H^3(\mathbb{Z}/p;\mathbb{Z}/p)$.
Although this was computed in \cite{murakami-ohtsuki-okada}, we give a proof based on a triangulation.
We represent $L(p,q)$ by an ordered $3$-cycle and give a labeling of $1$-simplices as indicated in Figure \ref{fig:lens}.
Here the triple $a$, $b$ and $c$ must satisfy $a^p = 1$ and $b=a^qbc$. 
Using multiplicative notation, the evaluating of $b_1b_2$ on this cycle is
\[
\sum_{i=0}^{p-1} b_1b_2([a|ia+b|-q a]) = \sum_{i=0}^{p-1} a \cdot b_2(ia+b, -q a) = -q a^2.
\] 
The second equality follows form the fact that $ia+b$ runs over $\mathbb{Z}/p$ and the equation (\ref{eq:2_cocycle_of_cyclic_group})
when $a$ is a unit, and also when $a$ is not a unit by a similar argument.
Therefore the Dijkgraaf-Witten invariant of $L(p,q)$ is equal to 
\[
\sum_{a=0}^{p-1} t^{-q a^2} \in \mathbb{Z}[t]/ (t^p-1) \cong \mathbb{Z}[\mathbb{Z}/p].
\]
Therefore this coincides with the shadow cocycle invariant up to a constant.

\begin{figure}
\begin{minipage}{270pt}
\begin{minipage}{120pt}

\vspace{20pt}
\input{lens_space.pstex_t}
\end{minipage}
\begin{minipage}{150pt}
\input{lens_developed.pstex_t}
\end{minipage}
\end{minipage}
\caption{A triangulation of $L(5,q)$ and a labeling where $a^5 = 1$ and $b=a^qbc$. }
\label{fig:lens}
\end{figure}

\end{document}